\tikzset{>=stealth}
\def\@tocline#1#2#3#4#5#6#7{\relax
  \ifnum #1>\c@tocdepth 
  \else
    \par \addpenalty\@secpenalty\addvspace{#2}%
    \begingroup \hyphenpenalty\@M
    \@ifempty{#4}{%
      \@tempdima\csname r@tocindent\number#1\endcsname\relax
    }{%
      \@tempdima#4\relax
    }%
    \parindent\z@ \leftskip#3\relax \advance\leftskip\@tempdima\relax
    \rightskip\@pnumwidth plus4em \parfillskip-\@pnumwidth
    #5\leavevmode\hskip-\@tempdima
      \ifcase #1
       \or\or \hskip 2em \or \hskip 2em \else \hskip 3em \fi%
      #6\nobreak\relax
    \dotfill\hbox to\@pnumwidth{\@tocpagenum{#7}}\par
    \nobreak
    \endgroup
  \fi}
\newtheorem{intro-thm}{Theorem}[]
\theoremstyle{plain}
\newtheorem{thm}{Theorem}[section]
\newtheorem{theorem}[thm]{Theorem}
\newtheorem{lemma}[thm]{Lemma}
\newtheorem{proposition}[thm]{Proposition}
\theoremstyle{definition}
\newtheorem{remark}[thm]{Remark}
\newtheorem{definition}[thm]{Definition}
\newcommand{\Ker}{{\rm Ker  }}
\renewcommand{\tilde}{\widetilde}
\newcommand{\sC}{{\mathcal C}}
\newcommand{\sF}{{\mathcal F}}
\newcommand{\sG}{{\mathcal G}}
\newcommand{\sS}{{\mathcal S}}
\newcommand{\sX}{{\mathcal X}}
\newcommand{\sY}{{\mathcal Y}}
\newcommand{\A}{{\mathbb A}}
\newcommand{\G}{{\mathbb G}}
\newcommand{\N}{{\mathbb N}}
\renewcommand{\P}{{\mathbb P}}
\newcommand{\Sh}{\sS h}
\begin{document}

\title[Hurewicz map]{The Hurewicz map in motivic homotopy theory}

\subjclass[2000]{14F42}

\author{Utsav Choudhury}

\author{Amit Hogadi}

\begin{abstract} For an $\A^1$-connected pointed simplicial sheaf $\sX$ over a perfect field $k$,
we prove that the Hurewicz map $\pi_1^{\A^1}(\sX) \to H_1^{\A^1}(\sX)$ is surjective. We also observe that the Hurewicz map for $\P^1_k$ is the abelianisation map. In the course of proving this result, we also show that for any morphism $\phi$ of strongly $\A^1$-invariant sheaves of groups, the image and kernel of $\phi$ are also strongly $\A^1$-invariant.
\end{abstract}

\maketitle

\section{Introduction}

For a field $k$, let $Sm/k$ denote the category of smooth $k$-varieties with Nisnevich topology. Let $\Delta^{op}Sh(Sm/k)$ denote the category of simplicial sheaves on the category $Sm/k$. This cateogry with its $\A^1$-model structure as defined in \cite{morel-voevodsky} is one of the main objects of study in $\A^1$-homotopy theory. For any pointed simplicial sheaf $\sX$ in $\Delta^{op}Sh(Sm/k)$ one defines the $\A^1$-homotopy group sheaves, $\pi_i^{\A^1}(\sX)$, to be the sheaves of simplicial homotopy groups of a fibrant replacement of $\sX$ in the $\A^1$-model structure. Morel, in his foundational work in \cite[Ch. 6]{morel} has defined, for every integer $i$, $\A^1$-homology groups $H_i^{\A^1}(\sX)$ and canonical Hurewicz morphisms $$\pi_i^{\A^1}(\sX) \to H_i^{\A^1}(\sX)$$
The above maps are analogous to the Hurewicz map that we have in topology. In the topological setup, the Hurewicz morphism for $i=1$ is known to be the abelianisation when the underlying space is connected. We will refer to this result as the Hurewicz theorem. Hurewicz theorem is expected in $\A^1$-homotopy theory (see \cite[6.36]{morel}), but not yet known. However the following theorem by Morel is the closest known result to the Hurewicz theorem.

\begin{theorem}\cite[6.35]{morel}\label{morel-hurewicz}
For a connected simplicial sheaf $\sX$, the Hurewicz morphism $$ \pi_1^{\A^1}(\sX) \to H_1^{\A^1}(\sX)$$ is a universal map to a strictly $\A^1$-invariant sheaf of abelian groups. 
\end{theorem}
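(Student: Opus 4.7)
The plan is to establish, naturally in every strictly $\A^1$-invariant sheaf of abelian groups $M$, a bijection
$$\Hom(\pi_1^{\A^1}(\sX), M) \cong \Hom(H_1^{\A^1}(\sX), M)$$
induced by precomposition with the Hurewicz morphism, and conclude via the Yoneda lemma applied inside the full subcategory of strictly $\A^1$-invariant sheaves of abelian groups. Both groups will be identified with pointed $\A^1$-homotopy classes of maps into the Eilenberg--MacLane object $K(M,1)$.

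The first step is to identify the right-hand side with $\A^1$-cohomology. By Morel's theorem that $K(M,n)$ is $\A^1$-local whenever $M$ is strictly $\A^1$-invariant, one has $H^n_{\A^1}(\sX, M) = [\sX, K(M,n)]_{\A^1,*}$. The definition of $\A^1$-homology as the homotopy of a strictly $\A^1$-invariant simplicial abelian replacement of the free abelian sheaf on $\sX$ produces a universal coefficient short exact sequence; specialising to $n=1$, the potential Ext-contribution from $H_0^{\A^1}(\sX)$ vanishes because $\sX$ is connected and so $H_0^{\A^1}(\sX) = \Z$, leaving
$$H^1_{\A^1}(\sX, M) \cong \Hom(H_1^{\A^1}(\sX), M).$$

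The second step is to identify the left-hand side with the same $\A^1$-cohomology group. Since the $\A^1$-homotopy sheaves of $K(M,1)$ are concentrated in degree $1$, $\A^1$-Postnikov obstruction theory yields
$$[\sX, K(M,1)]_{\A^1,*} \cong [K(\pi_1^{\A^1}(\sX),1), K(M,1)]_{\A^1,*} \cong \Hom(\pi_1^{\A^1}(\sX), M),$$
where the first isomorphism comes from the universal property of the first Postnikov section $\sX \to K(\pi_1^{\A^1}(\sX),1)$ and the second is standard for Eilenberg--MacLane objects. Concatenating the two identifications and checking naturality exhibits the Hurewicz map as the arrow that induces the bijection, so Yoneda finishes the argument.

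The hard part is the Postnikov identification in the second step. It depends on two nontrivial inputs supplied elsewhere in Morel's book and reused in the present paper: first, that $\pi_1^{\A^1}(\sX)$ is a strongly $\A^1$-invariant sheaf of (possibly nonabelian) groups, so that $K(\pi_1^{\A^1}(\sX),1)$ is itself $\A^1$-local and is a genuine representative of the first Postnikov section; second, a workable Postnikov tower theory in the $\A^1$-model structure ensuring that a pointed map from a connected $\sX$ to a $1$-truncated $\A^1$-local target factors uniquely (up to $\A^1$-homotopy) through the first stage. Once these are granted, the remaining work is a diagram chase verifying that the composite isomorphism is precomposition with the Hurewicz morphism.
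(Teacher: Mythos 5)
This statement is not proved in the paper at all: it is quoted verbatim from Morel (Theorem 6.35 of \emph{$\A^1$-algebraic topology over a field}) and used as a black box, so there is no internal proof to compare against. Your reconstruction --- identifying both $\Hom(\pi_1^{\A^1}(\sX),M)$ and $\Hom(H_1^{\A^1}(\sX),M)$ with $H^1_{\A^1}(\sX,M)$ via, respectively, the $\A^1$-locality of $K(M,1)$ plus Postnikov truncation, and the universal-coefficient sequence in Morel's $\A^1$-derived category (where the Ext term dies by connectedness) --- is essentially Morel's own argument and is correct in outline, with the heavy lifting correctly attributed to the strong $\A^1$-invariance of $\pi_1^{\A^1}$ and the $t$-structure whose heart is the strictly $\A^1$-invariant sheaves.
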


Recall that a sheaf of groups $\sG$ is called strongly $\A^1$-invariant if for $i=0,1$ the maps
$$ H^i(U, \sG) \to H^i(U\times \A^1, \sG)$$ are bijective for all $U$ in $Sm/k$. If $\sG$ is abelian, then it is called strictly $\A^1$-invariant if 
the above isomorphism holds for all $i\geq 0$. $\pi_1^{\A^1}(\sX)$ is strongly $\A^1$-invariant and $H_1^{\A^1}(\sX)$ is known to be strictly $\A^1$-invariant (see \cite[6.1, 6.23]{morel}). 

In topology, the surjectivity of the Hurewicz map is almost a direct consequence of the definitions. This is not the case in $\A^1$-homotopy theory. The main source of difficulty lies in the non-explicit nature of $\A^1$-fibrant replacements; non-explicit from the viewpoint of making explicit calculations.
In this paper we prove this surjectivity by using Giraud's theory of non-abelian cohomology. 
\begin{theorem} \label{hurewicz}
Let $k$ be a perfect field and $\sX$ be a pointed simplicial sheaf on $Sm/k$ in the Nisnevich topology. Then the Hurewicz map 
$ \pi_1^{\A^1}(\sX) \to H_1^{\A^1}(\sX) $ is surjective.
\end{theorem}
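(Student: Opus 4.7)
The plan is to factor the Hurewicz map through the abelianization of $G := \pi_1^{\A^1}(\sX)$ and to identify this abelianization with $H_1^{\A^1}(\sX)$ via the universal property of Theorem \ref{morel-hurewicz}. Let $h: G \to H_1^{\A^1}(\sX)$ be the Hurewicz map and $\pi: G \twoheadrightarrow G^{ab}$ the abelianization morphism in the category of sheaves of groups. Since $H_1^{\A^1}(\sX)$ is abelian, $h$ factors uniquely as $h = g \circ \pi$ for some $g: G^{ab} \to H_1^{\A^1}(\sX)$. Granting that $G^{ab}$ is strictly $\A^1$-invariant, the universal property of Theorem \ref{morel-hurewicz} applied to $\pi$ produces a unique $f: H_1^{\A^1}(\sX) \to G^{ab}$ with $\pi = f \circ h$. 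A quick chase then yields $f \circ g = \mathrm{id}_{G^{ab}}$ (since $\pi$ is epic) and $g \circ f = \mathrm{id}_{H_1^{\A^1}(\sX)}$ (by the uniqueness of the factorization of $h$ through itself), so $g$ is an isomorphism and $h = g \circ \pi$ is surjective.

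The problem therefore reduces to showing $G^{ab}$ is strictly $\A^1$-invariant, and since $G^{ab}$ is abelian, Morel's result that strongly $\A^1$-invariant abelian sheaves are strictly $\A^1$-invariant further reduces this to strong $\A^1$-invariance of $G^{ab}$. The key tool here is the lemma announced in the abstract: for any morphism $\phi$ of strongly $\A^1$-invariant sheaves of groups, both $\ker(\phi)$ and $\im(\phi)$ are strongly $\A^1$-invariant. I would prove this through Giraud's non-abelian five-term cohomological exact sequence for torsors, combined with $\A^1$-invariance of $H^1_{\mathrm{Nis}}$ for strongly $\A^1$-invariant coefficients. Applied to the product-of-commutators morphisms $\mu_n: G^{2n} \to G$, $(g_1,h_1,\ldots,g_n,h_n) \mapsto \prod_{i=1}^n [g_i,h_i]$, the lemma gives that each $\im(\mu_n)$ is strongly $\A^1$-invariant, and hence so is the directed union $[G,G] = \bigcup_n \im(\mu_n)$ inside $G$. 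A final application of Giraud's exact sequence to $1 \to [G,G] \to G \to G^{ab} \to 1$ then transfers strong $\A^1$-invariance from the kernel $[G,G]$ and the total sheaf $G$ to the quotient $G^{ab}$, closing the loop.

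The main obstacle is the image part of the lemma on $\ker(\phi)$ and $\im(\phi)$. Kernels are limits, so their $\A^1$-invariance follows essentially formally from that of the source. For images, however, one must pass from the presheaf image to its Nisnevich sheafification and simultaneously control the non-abelian $H^1_{\mathrm{Nis}}$ with coefficients in this sheafification; this is precisely where Giraud's theory of torsors becomes indispensable, and perfectness of $k$ enters through Morel's structural results on strongly $\A^1$-invariant sheaves (unramifiedness, and the description of sections over function fields). Handling the final transfer from $1 \to [G,G] \to G \to G^{ab} \to 1$ is also delicate because $[G,G]$ is normal but not central in $G$, so only the non-abelian portion of Giraud's sequence is available.
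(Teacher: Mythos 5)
Your overall architecture (factor $h$ through an abelian $\A^1$-invariant quotient, invoke the universal property of Theorem \ref{morel-hurewicz} twice to get an isomorphism, conclude surjectivity from surjectivity of the quotient map) is the right one, and your final transfer step along $1 \to [G,G] \to G \to G^{ab} \to 1$ is recoverable from Step 1 of the paper's proof of Theorem \ref{main theorem}. But there is a genuine gap at the heart of your argument: you need $[G,G]$ (equivalently, $G^{ab}$) to be strongly $\A^1$-invariant, and your proposed mechanism for this does not work. The maps $\mu_n : G^{2n} \to G$ sending $(g_1,h_1,\dots,g_n,h_n)$ to $\prod_i [g_i,h_i]$ are \emph{not} morphisms of sheaves of groups, only of sheaves of pointed sets; the kernel/image lemma is proved (and only makes sense) for group homomorphisms, since its proof runs through Theorem \ref{main theorem}, which requires an epimorphism of sheaves of \emph{groups} onto the image. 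So ``$\im(\mu_n)$ is strongly $\A^1$-invariant'' is not justified, and there is no evident way to present $[G,G]$ as the image of a group homomorphism out of a strongly $\A^1$-invariant sheaf. This gap is not incidental: if your route went through, you would have proved that the Hurewicz map \emph{is} the abelianization for every $\A^1$-connected $\sX$, which is the full Hurewicz theorem that the introduction explicitly flags as open (the paper only establishes it for $\P^1$, by an explicit computation showing $[F^{\A^1}(1),F^{\A^1}(1)] = h\underline{K}_2^{MW}$).

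The paper's proof avoids this obstruction entirely by not working with $G^{ab}$ itself but with $S^{\infty}(G^{ab})$, the universal $\A^1$-invariant quotient obtained by iterating naive $\A^1$-homotopy identifications (Lemma \ref{caa}). This sheaf is $\A^1$-invariant \emph{by construction}, is still a sheaf of abelian groups receiving an epimorphism from $G$, so Theorem \ref{main theorem} upgrades it to strongly (hence, over a perfect field, strictly) $\A^1$-invariant; and the composite $G \to G^{ab} \to S^{\infty}(G^{ab})$ is still universal among maps to strictly $\A^1$-invariant abelian sheaves, so your two-universal-properties argument applies verbatim with $S^{\infty}(G^{ab})$ in place of $G^{ab}$. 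That substitution is the missing idea: you do not need to control $\A^1$-invariance of the abelianization, only of its universal $\A^1$-invariant quotient, and surjectivity of the Hurewicz map survives the replacement because $G \to S^{\infty}(G^{ab})$ is still an epimorphism.
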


The above theorem will be deduced from the following result, which is of independent interest.

\begin{thm} \label{main theorem} Let $k$ be a perfect field. Let $G$ be a strongly $\A^1$-invariant sheaf of groups on $Sm/k$ and $G\to H$ be an epimorphism. Then $H$ is strongly $\A^1$-invariant iff it is $\A^1$-invariant. 
\end{thm}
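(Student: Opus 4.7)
The direction ``strong $\A^1$-invariance implies $\A^1$-invariance'' is immediate from the definitions. For the converse, suppose $H$ is $\A^1$-invariant and let $K := \ker(\phi)$, giving a short exact sequence $1 \to K \to G \xrightarrow{\phi} H \to 1$ of Nisnevich sheaves of groups on $Sm/k$. The task is to show that $H^1(U, H) \to H^1(U \times \A^1, H)$ is a bijection for every $U \in Sm/k$.

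The principal tool is Giraud's six-term exact sequence of pointed sets in non-abelian cohomology,
\[
1 \to K(V) \to G(V) \to H(V) \xrightarrow{\delta} H^1(V, K) \to H^1(V, G) \to H^1(V, H),
\]
natural in $V \in Sm/k$. I would apply this sequence to $V = U$ and $V = U \times \A^1$ and compare via the projection $U \times \A^1 \to U$. The first two columns are bijections by strong $\A^1$-invariance of $G$, and the third by $\A^1$-invariance of $H$; a simple diagram chase on the initial segment then shows $K$ is also $\A^1$-invariant on sections, and the fifth column is a bijection by strong $\A^1$-invariance of $G$.

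For the remaining fourth and sixth columns I would invoke Giraud's twisting trick: for each class $[Q] \in H^1(V, H)$, there are twisted forms ${}^Q K \subset {}^Q G$, Nisnevich-locally isomorphic to $K \subset G$, sitting in a short exact sequence $1 \to {}^Q K \to {}^Q G \to {}^Q H \to 1$, and the fiber of $H^1(V, G) \to H^1(V, H)$ over $[Q]$ is a quotient of $H^1(V, {}^Q K)$ by an explicit action coming from the twisted analogue of $\delta$. Since strong $\A^1$-invariance is a Nisnevich-local property, each twist ${}^Q G$ remains strongly $\A^1$-invariant, so the same comparison argument can be rerun for the twisted sequence. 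A careful non-abelian diagram chase, taking into account the actions on fibers rather than merely exactness of pointed sets, should then yield bijectivity of both $H^1(-, K)$ and, consequently, $H^1(-, H)$.

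The main obstacle will be to make the twisting comparison compatible across the projection $U \times \A^1 \to U$, so that twists of $G$-torsors on $U \times \A^1$ arise, at least after the comparison map is applied, from twists of $G$-torsors on $U$. This compatibility ultimately rests on the Nisnevich-local nature of the twisting together with the strong $\A^1$-invariance of all relevant twisted forms, a step where the perfectness of $k$ should enter essentially through Morel's structure theory of strongly $\A^1$-invariant sheaves on $Sm/k$.
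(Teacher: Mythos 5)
Your first step (deducing $\A^1$-invariance of $K$ from the initial segment of Giraud's six-term sequence, using strong $\A^1$-invariance of $G$) matches the paper. But the core of your argument has a genuine gap: to prove strong $\A^1$-invariance of $H$ one must show (after reducing to henselian local $U$, where $H^1(U\times\A^1,G)$ is already trivial) that every $H$-torsor on $U\times\A^1$ \emph{lifts} to a $G$-torsor, i.e.\ that $H^1(U\times\A^1,G)\to H^1(U\times\A^1,H)$ hits everything. The twisting trick you invoke describes the \emph{fibers} of $H^1(V,G)\to H^1(V,H)$ over classes already known to be in the image; it controls injectivity-type statements and says nothing about whether a given class of $H$-torsors lifts at all. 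For non-abelian $K$ that lifting obstruction lives one degree higher: it is a gerbe banded by (a twist of) the band of $K$, and this is exactly Giraud's ${\mathsf O}(\phi)$ and non-abelian $H^2$. No amount of diagram chasing at the $H^1$ level, twisted or not, can substitute for it.

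The paper kills the obstruction as follows: the exact sequence continues as $H^1(U\times\A^1,G)\to H^1(U\times\A^1,H)\to {\mathsf O}(\phi)$, and ${\mathsf O}(\phi)$ is trivial because, by Giraud's theorem, the set of gerbes with a fixed band $L$ is a free transitive set under $H^2(Z(L))$, and here $Z(L)=Z(K)$. So the decisive input, entirely absent from your proposal, is that $Z(K)$ has vanishing $H^2$ on $U\times\A^1$. This requires a new result proved in the paper --- the center of a strongly $\A^1$-invariant sheaf of groups is again strongly $\A^1$-invariant (proved by a retraction argument using that $BG$ is $\A^1$-local) --- combined with Morel's theorem that over a perfect field strongly $\A^1$-invariant abelian sheaves are strictly $\A^1$-invariant. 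That is the precise (and only) place perfectness enters; your appeal to ``Morel's structure theory'' gestures in the right direction but does not identify either the center or the degree-2 obstruction, which are the two ideas the proof actually turns on.
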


\begin{remark} If $k$ is perfect field, a theorem of Morel \cite[5.46]{morel} says that any strongly $\A^1$-invariant Nisnevich sheaf of abelian groups on $Sm/k$ is also strictly $\A^1$-invariant. Unfortunately it is not yet known if this statement holds for imperfect fields. This is the sole reason for assuming $k$ to be perfect in Theorems \ref{main theorem} and \ref{hurewicz}. Also note that strongly $\A^1$-invariant is a stronger notion than just $\A^1$-invariant. In particular, there exists $\A^1$-invariant sheaves which are not strongly $\A^1$-invariant (see \cite[Lemma 5.6]{ch}).
\end{remark}

For a morphism of strongly $\A^1$-invariant abelian sheaves over a perfect field, the kernel and image of the morphism are also strongly $\A^1$-invariant. This result is a consequence of a nontrivial theorem of Morel  (see \cite[6.24]{morel}) that the category of strongly $\A^1$-invariant sheaves of abelian groups is an abelian category, as it is obtained as a heart of a $t$-structure. The theorem below, can be viewed as a generalization of this result for non-abelian strongly $\A^1$-invariant sheaves.  Moreover the proof of this generalization is completely different and is more direct in the sense that it does not appeal to the existence of $t$-structures.

\begin{theorem}
Let $G\xrightarrow{\phi} H$ be a morphism of strongly $\A^1$-invariant sheaves of groups. Then the image and the kernel of $\phi$ are strongly $\A^1$-invariant. 
\end{theorem}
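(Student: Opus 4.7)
The plan is to treat the image $I := \im(\phi)$ and the kernel $K := \ker(\phi)$ separately, with Theorem~\ref{main theorem} doing the heavy lifting for the image and Giraud's long exact sequence in non-abelian cohomology handling the kernel.

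For the image, the first move is to observe that $G \twoheadrightarrow I$ is an epimorphism of Nisnevich sheaves and $G$ is strongly $\A^1$-invariant, so by Theorem~\ref{main theorem} it suffices to check that $I$ is $\A^1$-invariant as a sheaf. I would then prove the more general statement that any subsheaf $F$ of an $\A^1$-invariant sheaf $H$ is $\A^1$-invariant. Writing $p\colon U\times\A^1\to U$ for the projection and $0\colon U\to U\times\A^1$ for the zero section, the relation $p\circ 0 = \mathrm{id}_U$ together with the bijection $p^*\colon H(U)\xrightarrow{\sim} H(U\times\A^1)$ forces $p^*\circ 0^* = \mathrm{id}_{H(U\times\A^1)}$, and by naturality of the inclusion $F\hookrightarrow H$ the same identity holds on sections of $F$. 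Given $s \in F(U\times\A^1)$, the element $s_0 := 0^*s$ lies in $F(U)$ and satisfies $p^*s_0 = s$, which gives surjectivity of $p^*\colon F(U)\to F(U\times\A^1)$; injectivity is immediate from the inclusion into $H$. Applied to $I\hookrightarrow H$, this yields $\A^1$-invariance of $I$, and Theorem~\ref{main theorem} then promotes this to strong $\A^1$-invariance.

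For the kernel, we now have a short exact sequence $1\to K\to G\to I\to 1$ in which $G$ and $I$ are both strongly $\A^1$-invariant. The subsheaf argument above, applied to $K\hookrightarrow G$, shows that $K$ is $\A^1$-invariant on sections, so what remains is the $\A^1$-invariance of $H^1(-,K)$. I would compare Giraud's long exact sequence
\[
1 \to K(U)\to G(U)\to I(U)\to H^1(U,K)\to H^1(U,G)\to H^1(U,I)
\]
with its analogue over $U\times\A^1$. Strong $\A^1$-invariance of $G$ and $I$ makes every transition map other than the one on $H^1(-,K)$ a bijection, and a five-lemma adapted to pointed sets with equivariant actions (the form valid for Giraud's exact sequences) then forces the middle transition map to be bijective as well.

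The step I expect to require the most care is this non-abelian five-lemma: one must unpack Giraud's notion of exactness (where ``image of a map equals preimage of the basepoint'' is twisted by the actions of $G$ and $I$ on the $H^1$ terms) and verify that these identifications are compatible with pullback along $p$. The rest of the argument is essentially formal, with the substantive input being Theorem~\ref{main theorem}.
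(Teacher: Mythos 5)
Your treatment of the image is exactly the paper's: a subsheaf of an $\A^1$-invariant sheaf is $\A^1$-invariant (the retraction argument via $p^*\circ 0^*=\mathrm{id}$ is the standard justification, which the paper leaves implicit), and Theorem \ref{main theorem} upgrades this to strong $\A^1$-invariance. No issues there.

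For the kernel, your starting point (Giraud's exact sequence for $1\to K\to G\to I\to 1$ with $G$ and $I$ strongly $\A^1$-invariant) is also the paper's, but the way you propose to conclude has a genuine gap. The ``five-lemma for pointed sets'' you invoke over an arbitrary smooth $U$ needs more than exactness: in Giraud's sequence, the statement that two classes of $H^1(U,K)$ with the same image in $H^1(U,G)$ lie in one orbit of $I(U)$ is only available over the \emph{basepoint} fiber. Over the fiber of a nontrivial class $[Q]\in H^1(U,G)$ the relevant acting object is the twisted form ${}_{Q}I$, so closing the diagram chase requires $\A^1$-invariance of sections of the twisted forms ${}_{Q}K$ and ${}_{Q}I$ for every $G$-torsor $Q$ --- a point you do not address (it can be handled by Nisnevich descent, but it is not free, and it is precisely the step you flagged as delicate without resolving it). The paper sidesteps all of this: as in Step 1 of the proof of Theorem \ref{main theorem}, one checks strong $\A^1$-invariance of $K$ only on henselian local essentially smooth $U$, where $H^1(U,K)=H^1(U,G)=H^1(U,I)=*$, so the question collapses to showing $H^1(U\times\A^1,K)=*$. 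Since $H^1(U\times\A^1,G)=H^1(U,G)=*$, all of $H^1(U\times\A^1,K)$ lies in the basepoint fiber, hence is exhausted by the image of the connecting map from $I(U\times\A^1)=I(U)$; and that image is trivial because $G(U\times\A^1)=G(U)$ surjects onto $I(U)$ ($G\to I$ is an epimorphism and $U$ is henselian local). I would replace your five-lemma step by this reduction; as written, your kernel argument is incomplete at exactly that point.
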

\begin{proof}
The image ${\rm Image}(\phi)$ is $\A^1$-invariant, since it is a subsheaf of an $\A^1$-invariant sheaf $H$. Thus by Theorem \ref{main theorem} it is strongly $\A^1$-invariant. The kernel $K$ is strongly $\A^1$-invariant as it fits in the following exact sequence 
$$ 1\to K \to G \to {\rm Image(\phi)} \to 1$$ 
where the other two sheaves are strongly $\A^1$-invariant. 
\end{proof}

\noindent {\bf Acknowledgement}: We thank Fabien Morel, Tom Bachmann and O. R{\"o}ndigs for their comments.

\section{Preliminaries on Gerbes and Giraud's non-abelian cohomology}

Let $\sC$ be any small site (e.g. $Sm/k$ with Nisnevich topology) and $\Delta^{op}(\sC)$ be the category of simplicial sheaves on $\sC$. The goal of this section is to recall the main results of Giraud on non-abelian cohomology. Everything in this section is a subset of \cite{giraud}.  We work with the following definition of a gerbe.

\begin{definition} A simplicial sheaf $\sX$ on $\sC$ is called a gerbe if it is connected and if for any $U\in \sC$ and any $x\in \sX(U)$, the homotopy sheaves of groups $\pi_i(\sX_{|U}, x) = 0$ for all $i \in \N$.
\end{definition}

Given any simplicial sheaf $\sX$ (not necessarily a gerbe), one gets a category fibered in groupoids over $\sC$ defined by the fundamental groupoid construction: i.e. for every $U\in \sC$, the fiber category $\sX_{|U}$ is the fundamental groupoid of the space $\sX(U)$,  a category whose objects are elements of $\sX(U)$ and morphisms are paths up to homotopy. 
This category fibered in groupoids is in fact a gerbe in the sense of \cite[3.2]{laumon} if $\sX$ is connected. If $\sX$ was a gerbe to start with, then it can be recovered, up to weak equivalence, using this category fibered in groupoids using the simplicial nerve construction. 
A gerbe $\sX$, is called neutral if it has a global section. In this case, by making a choice of a global section, one can define the fundamental group of $\sX$. Since $\sX$ is connected, a different choice gives a fundamental group which can be canonically identified with the previous one, modulo an inner automorphism.

This motivates the following definition by Giraud.
\begin{definition}\cite[1.1.3]{giraud}
For two sheaves of groups $F$ and $G$, let ${\rm Isex}(F,G)$ denote the set of isomorphisms from $F$ onto $G$ modulo the action of inner automorphisms of $F$ (acting on the left) and the action of inner automorphisms of $G$. 
\end{definition}

Consider the pre-stack whose objects over $U$ are sheaves of groups over $U$ (small w.r.t to a fixed universe) with morphisms between $F$ and $G$  defined as elements of ${\rm Isex}(F,G)$. One can stackify this pre-stack (see \cite{laumon}) and objects of this stack are called bands. In particular, every sheaf of groups defines a band. Since every band is represented locally by a sheaf of groups, all those concepts related to sheaves of groups which are local in nature (e.g. exact sequence, epimorphism, kernel, center) also make sense for bands. It is a simple exercise to show that the center of a band is necessarily represented by a sheaf of groups. The 'fundamental group' of any gerbe $\sX$ (neutral or not) is always defined  as a band. For a band $L$, a gerbe banded by $L$ (or simply an $L$-gerbe) will mean a gerbe together with an isomorphism of $L$ with the band defined by $\sX$. An equivalence of $L$-gerbes means an equivalence of the gerbes compatible with the given isomorphisms of their bands with $L$. 

\begin{definition}\cite[3.1.1]{giraud}
For a band $L$ on a site $\sC$, let $H^2(\sC, L)$ or simply $H^2(L)$ denote the equivalence class of $L$-gerbes. The subset represented by neutral classes in $H^2(\sC, L)$ is denoted by $H^2(\sC, L)'$ or simply ($H^2(L)'$).  
\end{definition}

\begin{remark}
Note that $H^2(L)'$ is non-empty if and only if $L$ can be represented by a sheaf of groups (see \cite[3.2.4]{giraud}), in which case it is a singleton set, as can be seen for e.g. by Theorem \ref{all-abelian} stated below.
\end{remark}

\begin{remark}
If a band $L$ is representable by a sheaf of abelian groups $A$, then $H^2(L)$ defined above is in canonical bijection with the $H^2(A)$ as defined by sheaf cohomology \cite[3.4]{giraud}.
\end{remark}

Given an exact sequence of sheaves of groups 
$$ 1 \to A \xrightarrow{a} B \xrightarrow{b} C \to 1$$
one has a long exact sequence 
$$ 1 \to H^0(A) \to H^0(B) \to H^0(C) \to H^1(A) \to H^1(B) \to H^1(C).$$ 
One of the goals of introducing the non-abelian $H^2$ is to extend this exact sequence on the right. We first note that $B$ acts on itself by inner automorphisms. Since $A$ is normal in $B$, this action also induces an action on $A$. Thus $B$ also acts on ${\mathsf {band}}(A)$, where ${\mathsf {band}}(A)$ denotes the band defined by $A$. This action factors through $C$.  

We need the following definition to state the result extending the above long exact sequence to $H^2$:

\begin{definition}\cite[4.2.3]{giraud} \label{o}
For an  epimorphism of bands $v: M \to N$, one defines a pointed set 
$${\mathsf O}(v) := N(v)/R$$ where  
\begin{enumerate}
\item $N(v)$ is the set of all triples $(K,L,u)$ where $K$ is a gerbe with band $L$ and $u:L\to M$ is a monomorphism which makes the following sequence exact 
$$ 1 \to L \xrightarrow{u} M \xrightarrow{v} N \to 1.$$
\item $R$ is the equivalence relation defined by declaring $(K,L,u) \sim (K',L',u')$ if there exists a morphism of gerbes $\alpha: K\to K'$ such that the induced morphism $\alpha: L\to L'$ on bands makes the following diagram commute 
$$\xymatrix{
 L \ar[d]_\alpha \ar[r]^u & M \ar@{=}[d] \\
 L\ar[r]^{u'} & M
}$$
\item ${\mathsf O}(v)'$ denotes the subset of ${\mathsf O}(v)$ defined by all those $(K,L,u)$ where $K$ is a neutral $L$-gerbe. 
\end{enumerate} 

\end{definition}

Now one has the following general result.

\begin{theorem}\cite[4.2.8, 4.2.10]{giraud} 
Given an exact sequence of sheaves of groups $ 1 \to A \xrightarrow{f} B \xrightarrow{g} C \to 1$, we have the following long exact sequence 
$$1 \to H^0(A) \to H^0(B) \to H^0(C) \to H^1(A) \to H^1(B) \to H^1(C)\xrightarrow{d} {\mathsf O}(g) \to H^2(B) \to H^2(C) .$$
where the map $d$ is as defined in  \cite[4.7.2.4]{giraud} and exactness of the sequence is defined similar to that in the case of pointed sets with subsets ${\mathsf O}(g)'$ and $H^2(B)'$, $H^2(C)'$ playing the role of base points.  When the action of $C$ on ${\mathsf {band}}(A)$ is trivial, on has a canonical bijection ${\mathsf O}(g) \cong H^2(A)$.
\end{theorem}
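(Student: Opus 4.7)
The plan is to follow the standard construction of the long exact sequence of non-abelian cohomology. The portion through $H^1(C)$ is the classical six-term exact sequence for torsors, which I would prove by cocycle computations: interpret $H^1(X)$ as classifying $X$-torsors, define the boundary $H^0(C) \to H^1(A)$ by picking local lifts in $B$, and check exactness by direct cocycle comparison. This portion is standard and can be found in Serre's treatment of Galois cohomology or in \cite{giraud}.

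The genuinely new content lies in extending the sequence past $H^1(C)$. I would define the boundary $d\colon H^1(C) \to \mathsf{O}(g)$ by sending a $C$-torsor $P$ to the fibered category $\sK_P$ over $\sC$ whose $U$-objects are pairs $(Q,\alpha)$ with $Q$ a $B$-torsor on $U$ and $\alpha\colon Q\wedge^B C \simeq P_{|U}$. Since $B\to C$ is an epimorphism, lifts of $P$ exist locally, so $\sK_P$ is connected; the automorphism sheaf of a local object is the inner twist $A^P$ of $A$ by $P$, so $\sK_P$ is a gerbe banded by $L:=\mathsf{band}(A^P)$, and the canonical monomorphism $L\hookrightarrow \mathsf{band}(B)$ fits in the required exact sequence of bands. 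The map $\mathsf{O}(g)\to H^2(B)$ I would define via extension of the band: given $(K,L,u)$, form the pushout gerbe $K\wedge^L \mathsf{band}(B)$. Finally, $H^2(B)\to H^2(C)$ is the contraction functor $\sG\mapsto \sG\wedge^B C$.

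Exactness at each spot I would verify in turn. At $H^1(C)$, a class $[P]$ lies in $\ker d$ iff $\sK_P$ is neutral iff $P$ lifts to a global $B$-torsor. At $\mathsf{O}(g)$, a triple $(K,L,u)$ becomes trivial in $H^2(B)$ iff $K\wedge^L \mathsf{band}(B)$ is neutral; a global trivialization then produces a $C$-torsor $P$ whose boundary recovers $(K,L,u)$. Exactness at $H^2(B)$ is the analogous statement one step up: a $B$-gerbe whose contraction to a $C$-gerbe is neutral can be reconstructed from a datum in $\mathsf{O}(g)$ by taking the subgerbe of lifts of a chosen global $C$-section. The concluding bijection $\mathsf{O}(g)\cong H^2(A)$ when $C$ acts trivially on $\mathsf{band}(A)$ would follow because every inner form $A^P$ is then canonically identified with $A$, so every class in $\mathsf{O}(g)$ has a canonical representative banded by $A$, and the assignment $(K,\mathsf{band}(A),\iota)\mapsto [K]\in H^2(A)$ is well-defined and inverse to the obvious map.

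The main obstacle will be the 2-categorical nature of bands: morphisms of bands are defined only up to inner automorphism, so each construction above must be shown to be well-defined at the level of equivalence classes rather than on the nose. In particular, the pushout $K\wedge^L \mathsf{band}(B)$ and the gerbe of lifts $\sK_P$ must be checked to depend only on the isomorphism class of the input data, which is precisely the book-keeping that Giraud's formalism of gerbes and bands is built to handle.
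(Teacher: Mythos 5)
This statement is quoted from Giraud \cite[4.2.8, 4.2.10]{giraud} and the paper supplies no proof of its own, so there is nothing in the paper to compare against; your outline is essentially Giraud's argument itself (the boundary $d$ as the gerbe of liftings of a $C$-torsor, banded by the inner twist of $A$; the map $\mathsf{O}(g)\to H^2(B)$ by extension of the structural band; exactness tested against the subsets of neutral classes rather than a single basepoint). The route is correct; the only places where your sketch is thinner than what a full proof requires are exactly the ones you flag yourself --- well-definedness of the constructions modulo inner automorphisms of bands, and the injectivity/surjectivity check in the final bijection $\mathsf{O}(g)\cong H^2(A)$ --- all of which is carried out in the cited sections of Giraud.
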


Another important result of Giraud we need is the following.
\begin{theorem}\cite[3.3.3]{giraud} \label{all-abelian}
Let $L$ be a band and $C$ be its center. Then one has a canonical action of $H^2(C)$ on $H^2(L)$ which is free and transitive. 
\end{theorem}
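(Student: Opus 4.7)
The plan is to reduce the statement to a \v{C}ech $2$-cocycle calculation by working on a cover where the band $L$ becomes representable. By \cite[3.2.4]{giraud} any band is locally represented by a sheaf of groups, so I would choose a cover $\{U_i\}$ of the site together with representatives $L_i := L_{|U_i}$, with transition data on overlaps defined only up to inner automorphisms. The center $C$, being representable by a sheaf of abelian groups, pulls back to $Z(L_i)$ on each $U_i$, and these glue honestly (not merely up to inner automorphisms, since they vanish on the center). After a suitable refinement, an $L$-gerbe $\sX$ is classified by a \v{C}ech $2$-cocycle $(g_{ijk})$ with values in $L$ (subject to the usual cocycle identity modulo the $L_i$-transition ambiguity), and a $C$-gerbe $\sY$ by an abelian cocycle $(c_{ijk})$ in $C$.

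With these data, the action is defined by $[\sX] \cdot [\sY] := [(g_{ijk}\, c_{ijk})]$. The product is again an $L$-valued $2$-cocycle precisely because $c_{ijk}$ is central, and it defines a gerbe with the same band $L$ as $\sX$. Changing the trivializing choices for $\sX$ or for $\sY$ modifies the product by an $L$-coboundary, so the action descends to $H^2$. Transitivity is then checked by taking two $L$-gerbes $\sX_1, \sX_2$ with compatible band trivializations on a common refinement, representing them by $(g^{(1)}_{ijk})$, $(g^{(2)}_{ijk})$, and forming the ratio $c_{ijk} := g^{(2)}_{ijk} (g^{(1)}_{ijk})^{-1}$; the hypothesis that both cocycles induce the same band $L$ forces $c_{ijk} \in C$, and the cocycle identity for $(c_{ijk})$ follows from those of $(g^{(1)}_{ijk})$ and $(g^{(2)}_{ijk})$. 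Freeness is symmetric: if $(g_{ijk} c_{ijk})$ is $L$-cohomologous to $(g_{ijk})$, projecting the witnessing $0$-cochain to $L/C$ shows its coboundary lies in $C$, so $(c_{ijk})$ is already a $C$-coboundary and $[\sY] = 0$ in $H^2(C)$.

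The main obstacle I expect is not the algebra of cocycles but the bookkeeping for bands: the inner-automorphism ambiguities in the transition isomorphisms of $L$ must be tracked consistently so that the assertion ``the ratio of two $L$-cocycles defining the same band lies in $C$'' is globally meaningful and not just locally. A cleaner, more intrinsic route that sidesteps these ambiguities is to define the action via a contracted product $\sX \wedge^C \sY$ in the $2$-category of gerbes: locally this reproduces the cocycle construction above, and the free-transitive statement then reduces to the fact that the band of automorphisms of the trivial $L$-gerbe is canonically $C$. In either approach the essential point is that the centrality of $C$ is exactly what makes both the action well-defined and the ``difference'' of two $L$-banded gerbes land in the abelian cohomology $H^2(C)$.
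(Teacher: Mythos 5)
This statement is quoted from Giraud \cite[3.3.3]{giraud}; the paper offers no proof of it, so there is nothing internal to compare against --- your proposal has to stand on its own as a proof of a nontrivial theorem from Giraud's book. The core mechanism you identify is the right one: centrality of $C$ is what makes the twisting action well defined, and the ``difference'' of two gerbes with the same band is forced into $C$ because their transition data induce the same outer automorphisms. At the level of cocycle algebra your computations for well-definedness and transitivity do go through, \emph{provided} one has already normalized the two gerbes so that they carry literally the same transition isomorphisms $\lambda_{ij}\colon L_j|_{U_{ij}}\to L_i|_{U_{ij}}$ (not merely the same band); that normalization is exactly the ``bookkeeping for bands'' you defer, and it is not a side issue but a real step of the proof.

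There are two genuine gaps. First, the reduction to \v{C}ech $2$-cocycles is not justified on a general site: an $L$-gerbe has objects only locally, and objects over $U_i$ and $U_j$ are only \emph{locally} isomorphic over $U_{ij}$, so the comparison isomorphisms $\phi_{ij}$ live on a further refinement of the overlaps. The classifying data is therefore hypercover (or Giraud's ``cocycle'' in the sense of IV.3.5) data, not a naive \v{C}ech cocycle relative to a cover, and \v{C}ech classes relative to covers need not exhaust $H^2$ on an arbitrary site. Second, the freeness argument is not correct as written: the equivalence between the gerbes classified by $(g_{ijk}c_{ijk})$ and $(g_{ijk})$ is witnessed by a $1$-cochain $(h_{ij})$, not a $0$-cochain, and one must use that the equivalence is an equivalence of \emph{$L$-gerbes} (compatible with the band identifications) to force the $h_{ij}$ to be central before concluding that $(c_{ijk})$ is a $C$-coboundary; ``projecting to $L/C$'' does not by itself deliver this. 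The intrinsic route you mention at the end --- defining the action by the contracted product $\sX\wedge^C\sY$ and computing the automorphisms of an $L$-gerbe --- is essentially Giraud's actual argument and is the one that avoids both problems; if you want a complete proof, that is the version to write out.
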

This theorem, loosely speaking, says that the non-abelian cohomology set $H^2(L)$ is essentially all abelian as it "comes from" its center. Note however that if $H^2(L)$ has no class represented by a neutral gerbe, then there is no canonical bijection between $H^2(L)$ and $H^2(C)$. \\

\noindent The following is a direct consequence of the above theorem and the definition of 
\begin{lemma} \label{o-trivial}
Let $1 \to A\xrightarrow{f} B \xrightarrow{g} C \to 1$ be an exact sequence of sheaves of groups. Assume that $H^2(A)$ is trivial. Then ${\mathsf O}(g)' = {\mathsf O}(g)$.
\end{lemma}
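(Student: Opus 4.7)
The plan is to show that, for the exact sequence $1 \to A \xrightarrow{f} B \xrightarrow{g} C \to 1$, any triple $(K,L,u)$ representing a class in ${\mathsf O}(g)$ must have $K$ neutral; the equality ${\mathsf O}(g) = {\mathsf O}(g)'$ then follows immediately from Definition \ref{o}(3).

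First I would unpack the definition: a representative of a class in ${\mathsf O}(g)$ comes equipped with a short exact sequence $1 \to L \xrightarrow{u} {\mathsf{band}}(B) \xrightarrow{g} {\mathsf{band}}(C) \to 1$ of bands, and $K$ is an $L$-gerbe. Both $u$ and ${\mathsf{band}}(f)$ realise their sources as the kernel of $g$ in the category of bands, so there is a canonical isomorphism $L \cong {\mathsf{band}}(A)$ compatible with the inclusions into ${\mathsf{band}}(B)$. Via this identification, $K$ may be regarded as an $A$-gerbe and its equivalence class represents an element of $H^2(A)$.

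Second, I would invoke the hypothesis. By the remark following the definition of $H^2(L)$, $H^2(A)$ is non-empty and contains the class of a neutral $A$-gerbe as its distinguished element. To say $H^2(A)$ is trivial is therefore to say every $A$-gerbe is equivalent to a neutral one. Since neutrality (the existence of a global section) is manifestly invariant under equivalence of gerbes, $K$ is itself neutral, and hence $(K,L,u) \in {\mathsf O}(g)'$ as required.

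The argument is essentially tautological once the kernel of $g$ in the category of bands is identified with ${\mathsf{band}}(A)$, so I do not foresee a substantial obstacle. The only mild bookkeeping is to verify that this identification is canonical enough (i.e.\ involves no choices beyond those already present in the data $(K,L,u)$) to legitimately transport the $L$-gerbe structure on $K$ to an honest $A$-gerbe structure, which is immediate from the universal property of kernels of morphisms of bands.
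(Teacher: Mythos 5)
Your overall plan---show that in every triple $(K,L,u)$ representing a class of ${\mathsf O}(g)$ the gerbe $K$ is neutral---is the same as the paper's, and your final step is fine: if the band of $K$ has $H^2$ equal to a singleton whose unique element is the neutral class, then $K$ is neutral. The gap is in the first step. You claim that $u$ and ${\mathsf{band}}(f)$ both realise their sources as ``the kernel of $g$ in the category of bands'' and that a universal property of kernels yields a canonical isomorphism $L\cong{\mathsf{band}}(A)$. No such universal property is available: morphisms of bands are only locally homomorphisms taken modulo inner automorphisms, exactness of $1\to L\to{\mathsf{band}}(B)\to{\mathsf{band}}(C)\to 1$ is a local condition, and in general there are several pairwise non-isomorphic bands $L$ admitting a monomorphism $u$ that makes this sequence exact; they are all locally isomorphic to $A$, but may be twisted against one another by the outer action of $C$ on ${\mathsf{band}}(A)$. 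This non-uniqueness is precisely why Giraud defines ${\mathsf O}(v)$ as a set of triples with $L$ varying, and why the identification ${\mathsf O}(g)\cong H^2(A)$ recalled in Section 2 carries the hypothesis that $C$ acts trivially on ${\mathsf{band}}(A)$---a hypothesis you are not given. A toy example on the classifying site of $\Gamma=\Z/2$: for $1\to\Z/3\to S_3\to\Z/2\to 1$ with trivial $\Gamma$-actions, both $\Z/3$ with the trivial outer $\Gamma$-action and $\Z/3$ with the inversion action occur as kernels of ${\mathsf{band}}(S_3)\to{\mathsf{band}}(\Z/2)$, and these are non-isomorphic bands. Consequently the class of $K$ lives in $H^2(L)$ for a band $L$ that is merely locally isomorphic to $A$, and the hypothesis that $H^2(A)$ is trivial does not directly apply to it.

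The paper circumvents exactly this point by descending to centers: it argues that $Z(L)$ agrees with $Z(A)$ and then invokes Theorem \ref{all-abelian}, the free and transitive action of $H^2(Z(L))$ on $H^2(L)$, to force $H^2(L)$ to be a singleton, from which neutrality of $K$ follows. To repair your argument you would have to either show that in the situation at hand every admissible $L$ is isomorphic to ${\mathsf{band}}(A)$ (false in general) or make the passage from the triviality of $H^2(A)$ to that of $H^2(L)$ through the center, as the paper does.
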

\begin{proof}
Let $(K, L, u)$ be any triple where $L$ is a band which fits in the exact sequence 
$$ 1 \to L \xrightarrow{u} {\mathsf {band}}(B) \to {\mathsf {band}}(C) \to 1$$
and $K$ is an $L$-gerbe. To prove the theorem it is enough to show that $K$ is neutral. However we note that center $Z(L)$ coincides with $Z(A)$,  the center of $A$. The result then directly follows from the above theorem. 
\end{proof}

\section{Strong $\A^1$-invariance of the center and applications}
The goal of this section is to prove the theorems mentioned in the introduction. We start by proving the following. 

\begin{thm} \label{center} Let $G$ be a strongly $\A^1$-invariant sheaf of groups on $Sm/k$. Then $Z(G)$, the center of $G$ is also strongly $\A^1$-invariant.
\end{thm}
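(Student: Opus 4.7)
The plan is to verify strong $\A^1$-invariance of $Z(G)$ by checking the two required conditions separately: that $Z(G)$ is $\A^1$-invariant as a sheaf, and that $H^1(-, Z(G))$ is $\A^1$-invariant. The first is elementary: since $G$ is $\A^1$-invariant, there is a natural bijection $G(U) = G(U\times\A^1)$, and a section $g$ lies in $Z(G)(U)$ iff it commutes with every $h \in G(V)$ for every smooth $V\to U$. This commutation condition is invariant under $\A^1$-base change, since any $V\to U$ factors through $V\to V\times\A^1\to U\times\A^1\to U$ via the zero section, and conversely any $V\to U\times\A^1$ composes to a morphism $V\to U$. Thus $Z(G)(U) = Z(G)(U\times\A^1)$.

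For the second condition, I would apply the Giraud long exact sequence from Section 2 to the central extension
$$1 \to Z(G) \to G \to G/Z(G) \to 1.$$
Because $Z(G)$ is central, the action of $G/Z(G)$ on $\mathsf{band}(Z(G))$ is trivial, so $\mathsf{O}(g) \cong H^2(-, Z(G))$, and we obtain the familiar five-term exact sequence of pointed sets
$$G(U) \to (G/Z(G))(U) \xrightarrow{\delta} H^1(U, Z(G)) \to H^1(U, G) \to H^1(U, G/Z(G)).$$
A key preliminary step is that $G/Z(G)$ is $\A^1$-invariant as a sheaf: the presheaf $V\mapsto G(V)/Z(G)(V)$ is $\A^1$-invariant by the first step together with $\A^1$-invariance of $G$, and Nisnevich sheafification preserves $\A^1$-invariance (Morel). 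Combined with strong $\A^1$-invariance of $G$, this makes the terms $G(U)$, $(G/Z(G))(U)$, and $H^1(U, G)$ in the sequence into $\A^1$-invariant functors.

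I would then compare the five-term sequences for $U$ and $U\times\A^1$. Injectivity of the pullback $H^1(U, Z(G)) \to H^1(U\times\A^1, Z(G))$ follows from a diagram chase through the $\A^1$-invariant terms listed above. For surjectivity, given $\alpha' \in H^1(U\times\A^1, Z(G))$, its image $Q' \in H^1(U\times\A^1, G) = H^1(U, G)$ descends to a $G$-torsor $Q$ on $U$; the obstruction class $c(Q) \in H^1(U, G/Z(G))$ becomes trivial after pullback to $U\times\A^1$, and I claim it is already trivial on $U$. The point is that any $G/Z(G)$-torsor, being Nisnevich-locally isomorphic to $G/Z(G)$, is itself $\A^1$-invariant as a sheaf of sets (by the sheaf-gluing formula applied to a Nisnevich trivialization), so in particular its set of global sections is unchanged by $\A^1$-base change. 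Hence $Q$ lifts to some $\beta \in H^1(U, Z(G))$, and a final adjustment by a suitable element of $(G/Z(G))(U) = (G/Z(G))(U\times\A^1)$, acting via the boundary $\delta$ which is a group homomorphism because the extension is central, matches $\alpha'$. The main obstacle is precisely the descent of the vanishing of $c(Q)$ from $U\times\A^1$ to $U$: since the strong $\A^1$-invariance of $G/Z(G)$ is not yet available (and is in fact not clear without Theorem \ref{main theorem}), this must be accomplished by the torsor-level argument above rather than by direct injectivity of $H^1(-, G/Z(G))$ on $\A^1$-pullback.
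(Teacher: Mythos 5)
Your overall strategy --- splitting strong $\A^1$-invariance into invariance of sections plus invariance of $H^1$, and attacking the latter with the Giraud five-term sequence for the central extension $1\to Z(G)\to G\to G/Z(G)\to 1$ --- is genuinely different from the paper's proof, which never touches $G/Z(G)$: the paper takes an $\A^1$-fibrant replacement $BZ(G)\to\sX$, lifts $BZ(G)\to BG$ along it to get $h:\sX\to BG$, shows that $h_*$ lands in $Z(G)$ because $BZ(G)\to BG$ is literally equal to all of its $G(U)$-conjugates, and concludes that $Z(G)$ is a retract of the strongly $\A^1$-invariant sheaf $\pi_1(\sX)$. Your Step 1 ($\A^1$-invariance of the sheaf $Z(G)$) is correct, and so is the observation that a class in $H^1(U,G/Z(G))$ that dies on $U\times\A^1$ was already trivial (that is just the retraction $\sigma_0^*\circ\mathrm{pr}^*=\mathrm{id}$, which also gives injectivity of $H^1(U,Z(G))\to H^1(U\times\A^1,Z(G))$ for free).

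The gap is the claim that $G/Z(G)$ is $\A^1$-invariant because ``Nisnevich sheafification preserves $\A^1$-invariance (Morel)''. No such theorem is available, and this is precisely the difficulty the whole paper is organized around: the presheaf of naive $\A^1$-homotopy classes is $\A^1$-invariant as a presheaf, yet its sheafification $S(\sF)$ is not known to be, which is why Lemma \ref{caa} must iterate to $S^{\infty}(\sF)$; likewise Theorem \ref{main theorem} takes $\A^1$-invariance of a quotient sheaf as a \emph{hypothesis} rather than deriving it. Concretely, a section of the sheaf $G/Z(G)$ over $U\times\A^1$ is a Nisnevich-local family of sections of $G$ whose discrepancies lie in $Z(G)$ --- i.e.\ exactly the kind of $Z(G)$-cocycle data over $U\times\A^1$ whose triviality you are trying to prove, so the argument becomes essentially circular at the one step that carries the content (the ``final adjustment'' by an element of $(G/Z(G))(U\times\A^1)$, which moreover should really be a section of the inner twist $(G/Z(G))_{Q}$, since exactness of the untwisted sequence only controls the fibre over the basepoint). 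Without an independent handle on $(G/Z(G))(U\times\A^1)$, surjectivity of $H^1(U,Z(G))\to H^1(U\times\A^1,Z(G))$ is not established; the paper's retract argument is designed precisely to avoid ever having to understand this quotient sheaf.
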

\begin{proof}
Since $G$ is strongly $\A^1$-invariant, $BG$ is $\A^1$-local. By choosing a simplicially fibrant model for $BG$ we may further assume, without loss of generality, that $BG$ is $\A^1$-fibrant. To show $Z(G)$ is strongly $\A^1$-invariant, we need to show $BZ(G)$ is $\A^1$-local. Let $BZ(G) \xrightarrow{u} \sX$ be an $\A^1$-fibrant replacement (in the category of pointed spaces). Thus $u$ is a trivial $\A^1$-cofibration. To prove the theorem, it suffices to show that $u$ is a simplicial weak equivalence. Equivalently, it suffices to show that the map on sheaves of fundamental groups 
$$ \pi_1(BZ(G)) \longrightarrow \pi_1(\sX) (=: H)$$ is an isomorphism.  Since $BG$ is $\A^1$-fibrant and $u$ is a trivial $\A^1$-cofibration, we have a factorization $h$ as below 
$$\xymatrix{
BZ(G) \ar[d]_-u  \ar[r]& BG \\
\sX \ar[ru]_-{\exists h} & 
}$$ 
which gives a commutative diagram of the maps induces on the fundamental groups  

$$\xymatrix{
Z(G) \ar[d]_-{u_*} \ar[r] & G \\
H \ar[ru]_-{h_*} & 
}$$
\end{proof}
In the above diagram, if we show that the image of $h_*$ is $Z(G)$, then it will follow that $Z(G)$ is a retract of the strongly $\A^1$-invariant sheaf $H$ and hence is itself strongly $\A^1$-invariant. Thus it suffices to show that image of $h_*$ is contained in $Z(G)$. 
This is equivalent to showing that for every smooth $k$-scheme $U$ and an element $g\in G(U)$, the map $h_{|U}$ is homotopic to the composite of  
$$ \sX_{|U} \xrightarrow{h_{|U}} BG_{|U} \xrightarrow{x\mapsto gxg^{-1}} BG_{|U}.$$
\noindent But note that the base change functor from $\Delta^{op}(\Sh(k)) \to  \Delta^{op}(\Sh(U))$ preserves trivial $\A^1$-cofibrations since it is a left quillen functor. Moreover it also preserves $\A^1$-fibrations.   Thus 
$$BZ(G)_{|U} \xrightarrow{\nu_{|U}} \sX_{|U} $$ is a trivial $\A^1$-cofibration. Since $Z(G)$ is the center of $G$, the map $BZ(G)_{|U} \xrightarrow{\nu_{|U}} BG_{|U}$ is homotopic (in fact equal) to the composite 
$$ BZ(G)_{|U} \to BG_{|U} \xrightarrow{x\mapsto gxg^{-1}} BG_{|U}.$$
The proof now follows from commutative diagram below, using the fact that $\nu_{\scriptscriptstyle{ |U}}$ is an $\A^1$-weak equivalence and the fact that $BG_{|U}$ is $\A^1$-local by Lemma \ref{projective-trick}.
$$\xymatrix{
BZ(G)_{|U} \ar[d]_-{\nu_{\scriptscriptstyle{ |U}}} \ar[r] & BG_{|U} \ar[r]^{x\mapsto gxg^{-1}} & BG_{|U}\\
\sX_{|U} \ar[ru]_-{h_{|U}} & & 
}$$

\begin{lemma}\label{projective-trick}
$U\in Sm/k$. Then the restriction functor $\Delta^{op}(\Sh(k)) \to  \Delta^{op}(\Sh(U))$ takes $\A^1$-fibrant objects to $\A^1$-local objects. 
\end{lemma}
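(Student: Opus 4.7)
The plan is to realize the restriction functor $j^* := (-)_{|U}$ as a right Quillen functor for the $\A^1$-model structures, since then it preserves fibrant objects and hence, a fortiori, $\A^1$-local objects. Let $j : Sm/U \to Sm/k$ denote the forgetful functor $(Y \to U) \mapsto Y$; this functor is continuous for the Nisnevich topology (covers of $Y \to U$ are precisely Nisnevich covers of the underlying scheme $Y$), and the induced restriction $j^*$ on sheaves admits a left adjoint $j_!$ obtained by left Kan extension along $j$ followed by sheafification. The central property of $j_!$ that the proof exploits is that on a representable object $(Y \to U) \in Sm/U$ it acts by relabelling: $j_!(Y \to U) = Y \in Sm/k$, and consequently $j_!$ carries the $\A^1$-projection $Y \times \A^1 \to Y$ in $\Delta^{op}(\Sh(U))$ to the same-named $\A^1$-projection in $\Delta^{op}(\Sh(k))$.

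I would first check that $j_!$ is left Quillen for the local injective (Nisnevich) simplicial model structure. Cofibrations are monomorphisms, and $j_!$ preserves monos because the generating monos $\partial\Delta^n \otimes Y \hookrightarrow \Delta^n \otimes Y$ (with $Y$ representable) are sent to monos of the same shape, and the general case follows by stability under colimits. Local weak equivalences are preserved because Nisnevich stalks of an object of $Sm/U$ at a point coincide with its Nisnevich stalks viewed inside $Sm/k$. I would then upgrade this to the $\A^1$-model structures via Bousfield localization: it suffices to verify that $j_!$ sends a generating set of $\A^1$-trivial cofibrations, namely mapping cylinders of the projections $Y \times \A^1 \to Y$ for $Y \in Sm/U$, to $\A^1$-trivial cofibrations in $\Delta^{op}(\Sh(k))$. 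By the relabelling observation above, this is immediate.

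The main technical point, and the only place I expect to need care, is the compatibility $j_!(Y \times_U \A^1_U) \cong j_!(Y) \times_k \A^1_k$ implicit in the previous step, that is, the interaction of the left adjoint with the interval object. But this reduces to the tautology $\A^1_U = U \times_k \A^1_k$ together with the fact that $j_!$ acts as the identity on underlying $k$-schemes of representables, and that finite products of representable sheaves are again representable. Once $j_!$ is shown to be left Quillen, the right adjoint $j^*$ is right Quillen and therefore sends $\A^1$-fibrant objects to $\A^1$-fibrant, hence in particular to $\A^1$-local, objects, which is the desired conclusion.
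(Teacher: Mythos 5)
Your argument is correct in substance and reaches the stated conclusion (in fact a slightly stronger one), but it takes a genuinely different route from the paper. The paper works directly with the characterization of $\A^1$-local objects: an $\A^1$-fibrant $\sY$ satisfies the BG property and $\A^1$-invariance of its sections, both of which are inherited by $\sY_{|U}$ because every Nisnevich distinguished square over $U$ is one over $k$ and every $V/U$ is a $V/k$; the arguments of \cite[A.6]{morel} then give $\A^1$-locality. You instead exhibit the restriction $j^*$ as a right Quillen functor by checking that $j_!$ is left Quillen for the localized injective structures. This buys more: restriction then preserves $\A^1$-fibrant objects, not merely $\A^1$-local ones, and the claim in the proof of Theorem \ref{center} that base change preserves trivial $\A^1$-cofibrations falls out of the same adjunction; the cost is invoking the Bousfield-localization formalism rather than the concrete BG criterion.

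Two steps need repair, though neither is fatal. First, your justification that $j_!$ preserves monomorphisms does not work as written: the maps $\partial\Delta^n\otimes Y \to \Delta^n\otimes Y$ generate only the projective cofibrations, whose saturation is strictly smaller than the class of all monomorphisms used as cofibrations in the injective (Morel--Voevodsky) structure, so "stability under colimits" does not reach the general mono. The correct (and easier) argument is structural: since $U\in Sm/k$, the site $Sm/U$ is the slice $(Sm/k)/U$, so $\Sh(Sm/U)\simeq \Sh(Sm/k)/U$ and $j_!$ is the forgetful functor from the slice topos, which visibly preserves monomorphisms (and pullbacks, which also gives the stalkwise comparison you need for local weak equivalences). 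Second, the localization step should be quoted in its standard form: a left Quillen functor between the unlocalized structures descends to the localizations as soon as it sends the localizing set --- the projections $Y\times\A^1\to Y$ for $Y\in Sm/U$ --- to $\A^1$-weak equivalences, which your relabelling identity $Y\times_U\A^1_U = Y\times_k\A^1_k$ supplies; there is no need to identify the generating trivial cofibrations of the localized structure, which are not the mapping cylinders you name. With these repairs the argument is complete.
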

\begin{proof}
Let $\sY\in \Delta^{op}(\Sh(k))$ be an $\A^1$-fibrant object. Then $\sY_{|U}$ has BG property, since BG property is defined in terms of Nisnevich distinguished triangles and every Nisnevich distinguished triangle in the category $Sm/U$ is also a Nisnevich disntinguished triangle in $Sm/k$. 
Moreover, since $\sY(V\times \A^1) \to \sY(V)$ is a weak equivalence for every $V/U$. Thus by arguments given as in \cite[A.6]{morel}, $\sY_{|U}$ is $\A^1$-local. 
\end{proof}

\begin{proof}[Proof of \ref{main theorem}]
Let $K$ denote the kernel of the the epimorphism $G\to H$. Thus we have a short exact sequence of Nisnevich sheaves of groups 
$$ 1 \to K \to G \to H \to 1. $$

\noindent \underline{Step 1}:
For every smooth $k$-scheme $U$, this gives us an exact sequence (see \cite[3.3.1]{giraud})  of pointed cohomology sets
$$ 1 \to H^0(U,K) \to H^0(U, G) \to H^0(U, H) \to H^1(U, K) \to H^1(U, G) \to H^1(U, H)$$
Using functoriality of the above exact sequence in the case when $U$ is Hensel local, we deduce that the $\A^1$-invariance of $H$ implies (in fact is equivalent to) strong $\A^1$-invariance of $K$. By Theorem \ref{center}, $Z(K)$, the center of $K$ is strictly $\A^1$-invariant sheaf. \\

\noindent \underline{Step 2}:  To show strong $\A^1$-invariance of $H$, it is enough to show that for all henselian local essentially smooth schemes $U/k$, $H^1(U\times \A^1_k,H)$ is trivial. By \cite[4.7.2.4]{giraud}, we have an exact sequence of pointed sets
$$ \to H^1(U\times \A^1_k, G) \to H^1(U\times \A^1_k, H) \to O(\phi) $$
where $\phi$ denotes restriction of $G\to H$ to the over-category $(Sm/k)/U\times\A^1$ which will be denoted by $Sm_k/U\times \A^1$ for simplicity and $O(\phi)$ is as defined in \ref{o}. It is enough to show $O(\phi)$ is trivial. This follows from 
Lemma \ref{o-trivial} and the strict $\A^1$-invariance of $Z(K)$.

\end{proof}

Let $\sF$ be a sheaf on $Sm/k$. For $U\in Sm/k$ we say $\alpha, \beta \in \sF(U)$ are naive $\A^1$-homotopic if there exists a $\gamma \in \sF(U\times \A^1)$ such that 
$$ \alpha = \sigma_0^*(\gamma) \ \ \text{and} \ \ \beta = \sigma_1^*(\gamma)$$
where $\sigma_0, \sigma_1 : U\to U\times \A^1$ are the sections defined by $0$ and $1$ respectively. As in  \cite[2.9]{CAA}, let $S(\sF)$ denote the sheaf associated to the presheaf 
$$ U \mapsto \frac{\sF(U)}{\sim} $$
where $\sim$ denotes the equivalence relation generated by naive $\A^1$-homotopies. There is a canonical epimorphism 
$ \sF \to S(\sF)$. Let 
$$S^{\infty}(\sF) := \lim_{n \to \infty} S^n(\sF)$$
The following lemma is straightforward to check.
\begin{lemma} \label{caa} (see \cite[2.13]{CAA}) The canonical morphism $\sF \to S^{\infty}(\sF)$ is an epimorphism and is a universal map from $\sF$ to an $\A^1$-invariant sheaf. Moreover if $\sF$ is a sheaf of groups, then so is $S^{\infty}(\sF)$. 
\end{lemma}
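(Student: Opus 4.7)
The plan is to establish the three claims in turn: (a) surjectivity of $\sF \to S^\infty(\sF)$, (b) $\A^1$-invariance of $S^\infty(\sF)$ together with the universal property, and (c) preservation of the group structure. The formal points (a) and (c) I would dispose of first, since (a) is used in the uniqueness half of the factorization for the universal property.

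For (a), the presheaf map $\sF \to \bigl(U \mapsto \sF(U)/{\sim}\bigr)$ is surjective by construction, so the induced map of sheaves $\sF \to S(\sF)$ is an epimorphism; iterating and passing to the filtered colimit (which is exact in the Nisnevich topos) gives an epimorphism $\sF \to S^\infty(\sF)$. For (c), I would observe that naive $\A^1$-homotopies are compatible with the group operations: if $\alpha \sim \alpha'$ via $\gamma \in \sF(U \times \A^1)$ and $\beta \sim \beta'$ via $\delta \in \sF(U \times \A^1)$, then $\alpha\beta \sim \alpha'\beta'$ via $\gamma\delta$, and similarly for inversion. Hence the presheaf quotient is a presheaf of groups, its sheafification $S(\sF)$, each iterate $S^n(\sF)$, and the filtered colimit $S^\infty(\sF)$ are all sheaves of groups.

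For the universal property, suppose $\sG$ is $\A^1$-invariant and $f\colon\sF\to\sG$ is a morphism. Since $p^*\colon\sG(U)\to\sG(U\times\A^1)$ is a bijection while $\sigma_0^*$ and $\sigma_1^*$ are both left inverses to $p^*$, they must coincide. Consequently $f$ identifies every pair of naively $\A^1$-homotopic sections, factors uniquely through each $S^n(\sF)$ (uniqueness from (a)), and therefore through $S^\infty(\sF)$.

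The one genuinely non-formal step, which I expect to be the main obstacle, is showing that $S^\infty(\sF)$ is itself $\A^1$-invariant. Injectivity of $p^*$ is automatic from $\sigma_0^*\circ p^* = \mathrm{id}$. For surjectivity, the key geometric input is the multiplication map $\mu\colon\A^1\times\A^1\to\A^1$, $(s,t)\mapsto st$: given $\gamma \in \sF(U\times\A^1)$, the section $(\mathrm{id}_U\times\mu)^*\gamma \in \sF\bigl((U\times\A^1)\times\A^1\bigr)$ serves as a naive $\A^1$-homotopy over the base $U\times\A^1$ from $p^*\sigma_0^*\gamma$ (at $t=0$) to $\gamma$ (at $t=1$). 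Hence the classes of $\gamma$ and $p^*\sigma_0^*\gamma$ coincide in $S(\sF)(U\times\A^1)$ whenever $\gamma$ lifts to $\sF$, and the sheaf condition then propagates this identity to arbitrary sections of $S(\sF)(U\times\A^1)$; running the same argument at each stage and passing to the colimit delivers the $\A^1$-invariance of $S^\infty(\sF)$.
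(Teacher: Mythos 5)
The paper itself does not prove this lemma (it defers to \cite[2.13]{CAA}), so your attempt has to be judged on its own terms. Parts (a) and (c) and the universal property are fine: surjectivity of the presheaf quotient plus exactness of sheafification and filtered colimits gives the epimorphism; compatibility of naive homotopies with multiplication gives the group structure; and for an $\A^1$-invariant $\sG$ the equality $\sigma_0^*=\sigma_1^*=(p^*)^{-1}$ forces any $\sF\to\sG$ to factor through each $S^n(\sF)$ and hence through the colimit, uniquely by (a). You also correctly isolate the multiplication map $\mu$ as the geometric input for $\A^1$-invariance.

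The gap is in the sentence ``the sheaf condition then propagates this identity to arbitrary sections of $S(\sF)(U\times\A^1)$.'' It does not. A general section $t\in S(\sF)(U\times\A^1)$ lifts to $\sF$ only on a Nisnevich cover $\{V_i\}$ of $U\times\A^1$, and the $V_i$ are not of the form $W_i\times\A^1$, so the homotopy $(\mathrm{id}\times\mu)^*$ cannot be applied to the local lifts; nor can the identity $t=p^*\sigma_0^*t$ be checked locally on $U\times\A^1$, since $\sigma_0^*t$ is not determined by the restrictions $t|_{V_i}$. If your propagation step were valid it would prove that $S(\sF)$ itself is already $\A^1$-invariant, which is false in general --- that failure is the entire reason the construction iterates to $S^\infty$. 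The correct argument applies your homotopy one level up: for $t\in S^n(\sF)(U\times\A^1)$, the section $(\mathrm{id}_U\times\mu)^*t\in S^n(\sF)(U\times\A^1\times\A^1)$ is a naive homotopy between the honest sheaf sections $t$ and $p^*\sigma_0^*t$ of $S^n(\sF)$, so their classes agree in $S^{n+1}(\sF)(U\times\A^1)$ (not in $S^n(\sF)(U\times\A^1)$). Since sections over the quasi-compact quasi-separated object $U\times\A^1$ commute with the filtered colimit, every section of $S^\infty(\sF)(U\times\A^1)$ comes from some finite stage, and the level-shifted identification then gives surjectivity of $p^*$ for $S^\infty(\sF)$. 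With that repair your proof goes through.
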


\begin{proof}[Proof of Theorem \ref{hurewicz}]
By lemma \ref{caa}, the map $$ \pi_1^{\A^1}(\sX)\xrightarrow{h} H_1^{\A^1}(\sX)$$
 factors uniquely through 
$$\pi_1^{\A^1}(\sX) \xrightarrow{s} S^{\infty}\left( \pi_1^{\A^1}(\sX)^{ab} \right).$$
Since the map $s$ is an epimorphism, Theorem \ref{main theorem} implies that  $S^{\infty}\left( \pi_1^{\A^1}(\sX)^{ab} \right)$ is strongly $\A^1$-invariant sheaf of abelian groups. Thus $s$ is a universal map to strictly $\A^1$-invariant sheaf of abelian groups. However by \cite[6.35]{morel}, so is $h$. Thus the induced map from $S^{\infty}\left( \pi_1^{\A^1}(\sX)^{ab} \right) \to H_1^{\A^1}(\sX)$ must be an isomorphism. In particular $h$ must be an epimorphism. 
\end{proof}

\section{Hurewicz map for $\P^1_k$}
In this section we reserve the notation $H$ to denote the Hurewicz map for $\P^1$, i.e. $ \pi_1^{\A^1}(\P^1) \xrightarrow{H} H^{\A^1}_1(\P^1)$. The goal of this section is to prove the following propositions

\begin{proposition}\label{hurewicz-p1}
The kernel of the Hurewicz map for $\P^1_k$, 
$$ \pi_1^{\A^1}(\P^1) \xrightarrow{H} H^{\A^1}_1(\P^1)$$ 
 is equal to the commutator subgroup of $\pi_1^{\A^1}(\P^1)$. 
 \end{proposition}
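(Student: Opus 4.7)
The plan is to derive the statement from the general machinery of the previous sections, together with Morel's explicit computation of $\pi_1^{\A^1}(\P^1)$ coming from the $\G_m$-torsor $\A^2\setminus \{0\} \to \P^1$.

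First, since $H_1^{\A^1}(\P^1)$ is abelian, the commutator subgroup automatically lies in $\ker H$, so $H$ factors as
$$\pi_1^{\A^1}(\P^1) \twoheadrightarrow \pi_1^{\A^1}(\P^1)^{ab} \xrightarrow{\bar H} H_1^{\A^1}(\P^1),$$
with $\bar H$ surjective by Theorem \ref{hurewicz}. The proposition is then equivalent to the injectivity of $\bar H$. Using the identification $H_1^{\A^1}(\sX) \cong S^\infty(\pi_1^{\A^1}(\sX)^{ab})$ extracted from the proof of Theorem \ref{hurewicz}, $\bar H$ becomes the canonical map $\pi_1^{\A^1}(\P^1)^{ab} \to S^\infty(\pi_1^{\A^1}(\P^1)^{ab})$, which is an isomorphism precisely when $\pi_1^{\A^1}(\P^1)^{ab}$ is already $\A^1$-invariant. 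Applying Theorem \ref{main theorem} to the abelianisation epimorphism $\pi_1^{\A^1}(\P^1)\twoheadrightarrow \pi_1^{\A^1}(\P^1)^{ab}$, together with Morel's \cite[5.46]{morel} to pass from strong to strict $\A^1$-invariance in the abelian case, the whole problem reduces to a single $\A^1$-invariance check on $\pi_1^{\A^1}(\P^1)^{ab}$.

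To perform that check I would feed in Morel's description of $\pi_1^{\A^1}(\P^1)$. The Nisnevich-local fibration $\G_m \to \A^2\setminus\{0\}\to \P^1$, combined with $\pi_i^{\A^1}(\G_m) = 0$ for $i \geq 1$ and the computation $\pi_1^{\A^1}(\A^2\setminus\{0\}) \cong \mathbf{K}_2^{MW}$, yields a short exact sequence
$$1 \to \mathbf{K}_2^{MW} \to \pi_1^{\A^1}(\P^1) \to \G_m \to 1$$
of sheaves of groups. Since $\G_m$ is abelian, $[\pi_1^{\A^1}(\P^1),\pi_1^{\A^1}(\P^1)]$ is contained in the abelian subsheaf $\mathbf{K}_2^{MW}$, and hence $\pi_1^{\A^1}(\P^1)^{ab}$ sits as an extension of the strictly $\A^1$-invariant sheaf $\G_m$ by a quotient of the strictly $\A^1$-invariant sheaf $\mathbf{K}_2^{MW}$.

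The main obstacle is pinning down this quotient: one needs enough control on the induced $\G_m$-action on $\mathbf{K}_2^{MW}$ (equivalently, on the commutator pairing $\G_m \times \G_m \to \mathbf{K}_2^{MW}$) coming from the above fibration to identify $[\pi_1^{\A^1}(\P^1),\pi_1^{\A^1}(\P^1)]$ precisely, and thereby certify $\A^1$-invariance of the resulting quotient of $\mathbf{K}_2^{MW}$. Once that is carried out, the preceding steps assemble the conclusion $\ker H = [\pi_1^{\A^1}(\P^1),\pi_1^{\A^1}(\P^1)]$ purely formally from Theorems \ref{hurewicz} and \ref{main theorem}.
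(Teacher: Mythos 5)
Your reduction follows the paper's strategy: use the universality of $H$ (\cite[6.35]{morel}) to reduce the proposition to showing that $\pi_1^{\A^1}(\P^1)^{ab}$ is strongly $\A^1$-invariant, invoke Theorem \ref{main theorem} to reduce that to plain $\A^1$-invariance (your detour through $S^\infty$ is an equivalent way of saying the same thing), and then feed in the central extension $1 \to \underline{K}_2^{MW} \to \pi_1^{\A^1}(\P^1) \to \G_m \to 1$ from \cite[7.29]{morel}. Up to that point the argument is sound. But what you label ``the main obstacle'' is precisely the mathematical content of the proposition, and you leave it undone. The paper's Lemma \ref{hk2} resolves it by a direct computation with Morel's explicit relations in $F^{\A^1}(1)$: because the extension is central, the commutator subgroup over a field $F$ is generated by the commutators $\theta(U)\theta(V)\theta(U)^{-1}\theta(V)^{-1}$, and the relation $\theta(U)\theta(V) = \langle -1\rangle [U][V]\theta(UV)$ of \cite[7.31]{morel}, combined with the graded-commutativity rule $[V][U] = -\langle -1\rangle[U][V]$, gives $\theta(U)\theta(V)\theta(U)^{-1}\theta(V)^{-1} = \langle -1\rangle\bigl([U][V]-[V][U]\bigr) = h(h-1)[U][V]$; since $h-1 = \langle -1\rangle$ is a unit, these elements generate exactly $h\underline{K}_2^{MW}$. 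Without this identification you cannot certify $\A^1$-invariance of the quotient of $\underline{K}_2^{MW}$, so the proof does not close.

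There is also a second, smaller omission at the end: even after identifying the commutator subgroup as $h\underline{K}_2^{MW}$, one still needs an input to see that $\underline{K}_2^{MW}/h\underline{K}_2^{MW}$ is $\A^1$-invariant, so that the extension $0 \to \underline{K}_2^{MW}/h\underline{K}_2^{MW} \to \pi_1^{\A^1}(\P^1)^{ab} \to \G_m \to 0$ has $\A^1$-invariant middle term. The paper gets this from the strong $\A^1$-invariance of $h\underline{K}_2^{MW}$, citing \cite[3.32]{morel}. Your proposal gestures at ``certifying $\A^1$-invariance of the resulting quotient'' but supplies neither the identification nor this final citation; as written it is a correct outline rather than a proof.
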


As a consequence of the explicit computation of the Hurewicz map we obtain the following : 
\begin{proposition}\label{etah}
The sequence of Nisnevich sheaves $$ 0 \to h\underline{K}_2^{MW} \to \underline{K}_2^{MW} \xrightarrow{\eta} \underline{K}_1^{MW}$$ is exact.
\end{proposition}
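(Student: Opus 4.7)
The plan is to derive the exact sequence from Proposition~\ref{hurewicz-p1}, combined with Morel's explicit computations of $\pi_1^{\A^1}(\P^1)$ and $H_1^{\A^1}(\P^1)$. First, I would record the following inputs from \cite{morel}: there is a canonical isomorphism $H_1^{\A^1}(\P^1) \cong \underline{K}_1^{MW}$ (via $\P^1 \simeq \Sigma \G_m$ and $\tilde H_0^{\A^1}(\G_m) \cong \underline{K}_1^{MW}$), and an explicit presentation of $\pi_1^{\A^1}(\P^1)$ as a strongly $\A^1$-invariant sheaf of groups generated by $\A^1$-loops indexed by $\G_m$. Proposition~\ref{hurewicz-p1} then gives a canonical isomorphism $\pi_1^{\A^1}(\P^1)^{ab} \cong \underline{K}_1^{MW}$, and identifies the commutator subgroup $C := [\pi_1^{\A^1}(\P^1), \pi_1^{\A^1}(\P^1)]$ with the kernel of the Hurewicz map.

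Second, I would realise $C$ as a quotient of $\underline{K}_2^{MW}$. The commutator pairing $(u, v) \mapsto [\gamma_u, \gamma_v]$ of generating loops extends to a natural surjective homomorphism $\underline{K}_2^{MW} \twoheadrightarrow C$. Its kernel is precisely $h\,\underline{K}_2^{MW}$: the group-theoretic identity $[\gamma_u, \gamma_v] \cdot [\gamma_v, \gamma_u] = 1$ in $C$ translates, via the twisted anticommutativity $\{u,v\} + \langle -1 \rangle \{v,u\} = 0$ in $\underline{K}_2^{MW}$, into the relation $h \cdot \{u,v\} = 0$ in $C$, and Morel's presentation ensures that no further relations appear.

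Third, to extract the statement about $\eta$, I would compare the resulting isomorphism $C \cong \underline{K}_2^{MW}/h\underline{K}_2^{MW}$ with the algebraic $\eta$-map. Unwinding the central extension data associated to $\pi_1^{\A^1}(\P^1) \twoheadrightarrow \underline{K}_1^{MW}$ shows that $\eta$ is realised as the composite $\underline{K}_2^{MW} \twoheadrightarrow C \hookrightarrow \pi_1^{\A^1}(\P^1) \to \underline{K}_1^{MW}$, where the last arrow is a detection map (matching the Hopf-element identity of Morel's stable computations) that is injective on $C$. This yields $\ker \eta = h\,\underline{K}_2^{MW}$ and hence the claimed exact sequence. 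The main obstacle lies in the second step --- verifying that $h\,\underline{K}_2^{MW}$ is exactly the kernel of the commutator surjection $\underline{K}_2^{MW} \twoheadrightarrow C$, with no extra relations --- which requires careful bookkeeping with Morel's commutator identities in $\pi_1^{\A^1}(\P^1)$ and the universal property characterising it as the ``free'' strongly $\A^1$-invariant sheaf of groups on $\G_m$.
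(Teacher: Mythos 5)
Your overall strategy---reducing the statement to Proposition \ref{hurewicz-p1} together with Morel's explicit description of $\pi_1^{\A^1}(\P^1)$---is the same as the paper's, but the two steps that carry the actual content both fail as written.

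First, the identification $C\cong\underline{K}_2^{MW}/h\underline{K}_2^{MW}$ in your second step is not correct. By Morel's formulas one computes $\theta(U)\theta(V)\theta(U)^{-1}\theta(V)^{-1}=h(h-1)[U][V]=h[U][V]$ (using $h\langle-1\rangle=h$), so the commutator pairing extends to multiplication by $h$ on $\underline{K}_2^{MW}$; its \emph{image} is $h\underline{K}_2^{MW}$---this is exactly Lemma \ref{hk2}, which exhibits $C$ as the subsheaf $h\underline{K}_2^{MW}$---but its \emph{kernel} is the $h$-torsion, not $h\underline{K}_2^{MW}$. Your proposed derivation of the relation $h\{u,v\}=0$ from $[\gamma_u,\gamma_v]\cdot[\gamma_v,\gamma_u]=1$ collapses: substituting $\{v,u\}=-\langle-1\rangle\{u,v\}$ turns that identity into $h(\langle-1\rangle-1)\{u,v\}=0$, and $h(\langle-1\rangle-1)=0$ identically in $K_0^{MW}$, so it carries no information.

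Second, and more seriously, $\eta$ cannot factor through $C$ as in your third step: by Proposition \ref{hurewicz-p1}, $C$ \emph{is} the kernel of the Hurewicz map, so any composite $\underline{K}_2^{MW}\to C\hookrightarrow\pi_1^{\A^1}(\P^1)\xrightarrow{H}H_1^{\A^1}(\P^1)\cong\underline{K}_1^{MW}$ is zero, whereas $\eta$ is not. The missing ingredient is the computation of $H$ on all of $\underline{K}_2^{MW}$, not on $C$: using Morel's identity $\theta(U^{-1})^{-1}\theta(U^{-1}V)\theta(V)^{-1}=[U][V]$ and the suspension identification $\phi:H_1^{\A^1}(\P^1)\cong\underline{K}_1^{MW}$ with $\phi(H(\theta(W)))=[W]$, one finds $\phi(H([U][V]))=\eta[U^{-1}][V]$, i.e.\ $H|_{\underline{K}_2^{MW}}=\eta\circ\tau$ with $\tau([U][V])=[U^{-1}][V]$ (Lemmas \ref{explicit-h} and \ref{hurewicz-decoded}). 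Combined with $\Ker(H)\subset\Ker(\gamma)=\underline{K}_2^{MW}$ this gives $\Ker(\eta\circ\tau)=\Ker(H)=C=h\underline{K}_2^{MW}$, hence $\Ker(\eta)=\tau(h\underline{K}_2^{MW})=h\underline{K}_2^{MW}$, which is the proposition. Without this explicit formula for $H$ on $\underline{K}_2^{MW}$, the appeal to a ``detection map that is injective on $C$'' cannot be repaired.
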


\begin{remark}
We do not know if there is an elementary way to prove the above proposition, using generator and relations. In particular we do not know if 
$$ 0 \to h\underline{K}_n^{MW} \to \underline{K}_n^{MW} \xrightarrow{\eta} \underline{K}_{n-1}^{MW}$$
is exact for every $n\geq 1$. However, as pointed out to us by O. R{\"o}ndigs, it is possible that the above short exact sequence is induced by a cofiber sequence given in \cite[Prop. 11]{roendigs}. 
\end{remark}

The most difficult part of the computation in the above propositions is the universality of the Hurewicz map and the computation of $\pi_1^{\A^1}(\P^1)$ itself, both of which has been elegantly done in \cite[6.35, 7.3]{morel}. We first restate Morel's computation of $\pi_1^{\A^1}(\P^1)$ as it will also help us to build notation for use in subsequent calculation. 

Let $F^{\A^1}(1) := \pi_1^{\A^1}(\P^1)$. First we recall the following two maps defined by Morel:
\begin{enumerate}
\item A map $\theta: \G_m \to F^{\A^1}(1)$ which is a result of an $\A^1$-equivalence $\P^1_k \sim \Sigma(\G_m)$. 
\item A map $\underline{K}_2^{MW} \to F^{\A^1}(1)$ which is a result of applying $\pi_1$ to the map $\A^2-0 \to \P^1$ and a theorem of Morel which shows $\underline{K}_2^{MW} \cong \pi_1^{\A^1}(\A^2-0)$.
\end{enumerate}

In what follows, we will freely use standard notation for denoting elements of $K_*^{MW}$ used in \cite[Chapter 3]{morel}, e.g.  $\left< -1\right>, h, [U]$ etc. 

\begin{theorem}\cite[7.29]{morel} \label{fa1} As a sheaf of sets $F^{\A^1}(1)$ is a product $K_2^{MW}\times \G_m$ and the following describes the structure of $F^{\A^1}(1)$ completely:
\begin{enumerate}
\item[(i)] The sequence $$ 1\to \underline{K}_2^{MW} \to F^{\A^1}(1) \xrightarrow{\gamma} \G_m \to 1$$ is exact and is a central extension.
\item[(ii)] For two units $U,V$ in any field extension $F$ of $k$, the following hold
$$ \theta(U)\theta(V)^{-1} = [-U][-V]\theta(U^{-1}V)^{-1}$$
$$ \theta(U)^{-1}\theta(V) = [U^{-1}][-V] \theta(U^{-1}V).$$
\end{enumerate}
\end{theorem}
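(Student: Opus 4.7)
The plan is to assemble $F^{\A^1}(1) = \pi_1^{\A^1}(\P^1)$ from two independent inputs: the $\A^1$-equivalence $\P^1 \simeq \Sigma \G_m$, and Morel's prior computation $\pi_1^{\A^1}(\A^2 \setminus 0) \cong \underline{K}_2^{MW}$. The bridge between them is the tautological $\G_m$-bundle $\A^2 \setminus 0 \to \P^1$, which is an $\A^1$-fibration with fibre $\G_m$. The resulting long exact sequence of $\A^1$-homotopy sheaves reads, in the relevant range,
$$ \pi_1^{\A^1}(\G_m) \to \pi_1^{\A^1}(\A^2 \setminus 0) \to \pi_1^{\A^1}(\P^1) \to \pi_0^{\A^1}(\G_m) \to 1, $$
so once one verifies $\pi_1^{\A^1}(\G_m) = 0$ and identifies $\pi_0^{\A^1}(\G_m) = \G_m$, the short exact sequence of part (i) drops out, with $\gamma$ the connecting map. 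Centrality is then forced by a standard argument for principal bundles with abelian structure group: the conjugation action of $F^{\A^1}(1)$ on $\underline{K}_2^{MW}$ factors through the abelian sheaf $\pi_0^{\A^1}(\G_m)$, and the action of $\G_m$ on the fibre of a principal $\G_m$-bundle is trivial on $\A^1$-fundamental groups.

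For part (ii), the relations are Whitehead-product-type identities encoding the non-commutativity of $\Omega \Sigma \G_m$. My strategy would be to construct, for units $U,V$ in a field extension $F$ of $k$, explicit $\A^1$-loops in $\P^1_F$ representing $\theta(U)$, $\theta(V)$, and $\theta(U^{-1}V)$, and then to write down explicit $\A^1$-homotopies demonstrating that, for example, $\theta(U)\theta(V)^{-1}\theta(U^{-1}V)$ lies in the central subsheaf $\underline{K}_2^{MW}$. The second step is to identify the resulting element with the symbol $[-U][-V] \in \underline{K}_2^{MW}(F)$ using the generators-and-relations presentation from Morel's Chapter 3 (the Steinberg relation, the relations involving $\eta$, $h$, $\left<-1\right>$, and the standard symbol calculus). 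The sheaf-theoretic decomposition $F^{\A^1}(1) \cong \underline{K}_2^{MW} \times \G_m$ is then a formal consequence of (i) together with the existence of the set-theoretic section $\theta$ of $\gamma$, and the group law on the right-hand side is completely determined by (i) (centrality) and the identities in (ii).

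The main obstacle I expect is part (ii): these relations are not formal consequences of the extension (i) and cannot be read off from the abstract long exact sequence. They require genuine $\A^1$-homotopy constructions in $\P^1$ (or equivalently in the James-type model for $\Omega \Sigma \G_m$) together with a careful identification of the commutator elements against the symbolic presentation of $\underline{K}_2^{MW}$. Keeping track of orientations and sign conventions (i.e.\ the interplay between $h$, $\left<-1\right>$, and $\eta$) is where I would expect subtle errors to creep in. By comparison, the derivation of the central extension in (i) is relatively routine given the fibration $\A^2 \setminus 0 \to \P^1$ and the vanishing of $\pi_1^{\A^1}(\G_m)$.
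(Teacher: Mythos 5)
First, a point of order: the paper does not prove this statement at all --- it is quoted verbatim from Morel \cite[7.29]{morel} and used as a black box, so there is no internal proof to compare yours against. Judging the sketch on its own merits: your outline for part (i) is essentially the standard (and Morel's) argument. The $\G_m$-torsor $\A^2\setminus 0\to\P^1$, the $\A^1$-rigidity of $\G_m$ (giving $\pi_1^{\A^1}(\G_m)=0$ and $\pi_0^{\A^1}(\G_m)=\G_m$), and the identification $\pi_1^{\A^1}(\A^2\setminus 0)\cong\underline{K}_2^{MW}$ do yield the short exact sequence from the long exact sequence of the fibration. Your centrality argument, however, is too quick: the conjugation action factors through $\G_m$ simply because $\underline{K}_2^{MW}$ is abelian, but the resulting $\G_m$-action on $\underline{K}_2^{MW}$ is not trivial ``by a standard argument for principal bundles with abelian structure group.'' A priori a unit $U$ acts through multiplication by an element of $K_0^{MW}$ of the form $\left<U\right>$, and its triviality must be checked --- for instance by writing the scaling matrix ${\rm diag}(U,U)$ as ${\rm diag}(U^2,1)$ times an element of the $\A^1$-connected group $SL_2$ and using $\left<U^2\right>=1$. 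As stated, that step is a gap.

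The more serious gap is part (ii), and you have half-diagnosed it yourself. You propose to ``construct explicit $\A^1$-loops in $\P^1_F$ representing $\theta(U)$'' and write down explicit $\A^1$-homotopies between their products. This is not actually available: $\A^1$-homotopy sheaves are only computed after an $\A^1$-fibrant replacement, which is non-explicit (precisely the difficulty this paper's introduction emphasizes), and there is no usable model in which $\theta(U)$ is a concrete loop in $\P^1_F$ admitting hands-on homotopies. Morel's derivation of the relations in (ii) is algebraic rather than geometric: it runs through the identification of $\pi_1^{\A^1}(\Sigma\G_m)$ with the free strongly $\A^1$-invariant sheaf of groups generated by the pointed sheaf $\G_m$, the universal property of that object, and explicit cocycle manipulations for the torsor $\A^2\setminus 0\to\P^1$ that pin down both the extension class and the symbols $[-U][-V]$. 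Your proposal correctly flags (ii) as the hard part but supplies no mechanism that could actually produce these identities, so it is incomplete exactly where completeness is needed. (Your final observation --- that the set-theoretic decomposition $K_2^{MW}\times\G_m$ and the full group law follow formally from (i), (ii), and the section $\theta$ of $\gamma$ --- is correct.)
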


The following is the main calculation in the proof of Proposition \ref{hurewicz-p1}. 
\begin{lemma}\label{hk2} For any essentially smooth field extension $F/k$, the commutator subgroup of $F^{\A^1}(1)(F)$ is equal to  
$h K_2^{MW}(F)$. 
\end{lemma}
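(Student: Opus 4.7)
The plan is to use the central extension $1 \to \underline{K}_2^{MW} \to F^{\A^1}(1) \to \G_m \to 1$ from Theorem \ref{fa1}(i). Since this extension is central, every commutator $[x,y]$ in $F^{\A^1}(1)(F)$ lies in $K_2^{MW}(F)$ and depends only on $u = \gamma(x), v = \gamma(y) \in F^\times$; the standard central-extension argument then yields a biadditive, alternating pairing $c: F^\times \times F^\times \to K_2^{MW}(F)$ whose image generates the commutator subgroup. The goal is to show the explicit formula $c(U, V) = h\,[U][V]$ in $K_2^{MW}(F)$. Since $K_2^{MW}(F)$ is generated as an abelian group by products $[U][V]$, this immediately gives $[F^{\A^1}(1)(F), F^{\A^1}(1)(F)] = h K_2^{MW}(F)$.

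To compute $c(U,V)$, I will first extract the 2-cocycle $\phi$ defined by $\theta(U)\theta(W) = \phi(U,W)\cdot \theta(UW)$. From $\theta(U)^{-1}\theta(V) = [U^{-1}][-V]\,\theta(U^{-1}V)$ in Theorem \ref{fa1}(ii) one reads off $\phi(U, W) = -[U^{-1}][-UW]$. Using the identity $[X^{-1}][-X]=0$ in $K_2^{MW}$ (a consequence of the Steinberg-type relation $[X][-X]=0$ together with $[X^{-1}] = -\langle X^{-1}\rangle[X]$) and the decomposition $[-UV] = [-U] + [V] + \eta[-U][V]$, the difference $\phi(U, V) - \phi(V, U)$ simplifies to
\[
c(U,V) = [V^{-1}][U] - [U^{-1}][V].
\]

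The main step is to rewrite this as $h[U][V]$. Applying $[X^{-1}] = -\langle X\rangle[X]$ (using $\langle X^{-1}\rangle = \langle X\rangle$, since $\langle X\rangle^2 = \langle X^2\rangle = \langle 1\rangle = 1$ in $K_0^{MW}(F)$) together with $[Y][X] = -\langle -1\rangle[X][Y]$, one obtains $c(U,V) = (\langle U\rangle + \langle -V\rangle)[U][V]$. The identities $\langle X\rangle = 1 + \eta[X]$ and $\langle -1\rangle\eta = -\eta$ (the latter from $\eta h = 0$) give $\langle U\rangle + \langle -V\rangle = h + \eta([U]-[V])$. The residual term $\eta([U] - [V])[U][V]$ vanishes thanks to the key identity $[U^2] = h[U]$ in $K_1^{MW}(F)$, which follows from $[U^2] = 2[U] + \eta[U]^2$ and $\eta[U]([U]-[-1]) = \langle -1\rangle \eta[U][-U] = 0$; this yields $\eta[U]^2 = \eta[-1][U]$ (and similarly for $V$), and then applying $[V][U] = -\langle -1\rangle[U][V]$ shows that $\eta[U]^2[V]$ and $\eta[V][U][V]$ cancel. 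The main conceptual point is the identity $[U^2] = h[U]$, which collapses the correction term and exhibits the commutator in the clean symbolic form $h[U][V]$.
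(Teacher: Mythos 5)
Your proof is correct, and at its core it is the same computation as the paper's: both proofs use that $\underline{K}_2^{MW}$ is central in $F^{\A^1}(1)$ to reduce the commutator subgroup to the commutators $[\theta(U),\theta(V)]$, and then evaluate that commutator as the antisymmetrization of the multiplication cocycle for $\theta$. The difference is in where the cocycle comes from: the paper invokes Morel's formula $\theta(U)\theta(V)=\left<-1\right>[U][V]\theta(UV)$ (\cite[7.31]{morel}) as a black box, after which the commutator is $\left<-1\right>([U][V]-[V][U])=h(h-1)[U][V]$ and one concludes using that $h-1=\left<-1\right>$ is a unit; you instead extract the cocycle $\phi(U,W)=-[U^{-1}][-UW]$ directly from the relation $\theta(U)^{-1}\theta(V)=[U^{-1}][-V]\theta(U^{-1}V)$ of Theorem \ref{fa1}(ii), which makes the argument self-contained relative to what is stated in the paper but costs you a longer chain of Milnor--Witt identities (notably $[U^2]=h[U]$, equivalently $\eta[U]^2=\eta[-1][U]$, to kill the correction term $\eta([U]-[V])[U][V]$). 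Your final answer $c(U,V)=h[U][V]$ agrees with the paper's $h(h-1)[U][V]$, since $h\left<-1\right>=\left<-1\right>+\left<-1\right>^2=h$; all the identities you use ($[X][-X]=0$, $[X^{-1}]=-\left<X\right>[X]$, anticommutativity $[V][U]=-\left<-1\right>[U][V]$, $\eta h=0$, and generation of $K_2^{MW}(F)$ by symbols) are standard and correctly applied, so there is no gap.
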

\begin{proof}
Since $K_2^{MW}(F)$ is in the center of $F^{\A^1}(1)(F)$, we have that the commutator subgroup 
$$[F^{\A^1}(1)(F), F^{\A^1}(1)(F)] = \left< \theta(U)\theta(V)\theta(U)^{-1}\theta(V)^{-1} | \ U, V \in F\right>$$
where the angle brackets on RHS denote subgroup generated by the elements within. 
$${
\begin{split}
 \theta(U)\theta(V) & = \left<-1\right>[U][V]\theta(UV)  & ...(\text{by \cite[7.31]{morel}}) \\
 \theta(V)\theta(U) & =  \left<-1\right>[U][V]\theta(UV) & \\
 \theta(U)\theta(V)\theta(U)^{-1}\theta(V)^{-1}  & = \left<-1\right>[U][V]\theta(UV) \cdot \big(-\left<-1\right>[V][U]\theta(UV)^{-1}\big) & ...(\text{by \cite[7.29(ii)]{morel}})  \\
     & = \left<-1\right>\big([U][V] -  [V][U]) & ...(\because K_2^{MW} \text{is in the center})\\ 
     & = [U][V]\big(\left<-1\right> + \left<-1\right>^2\big) & ...(\text{using \cite[3.7(3)]{morel}}) \\
     & = h(h-1)[U][V] & ...(\because h=1+\left<-1\right>)
\end{split}
}$$
Thus we have 
$${
\begin{split}
[F^{\A^1}(1)(F), F^{\A^1}(1)(F)] & = \left< h(h-1)[U][V] \ | \ U, V \in F\right> & \\
	& = h(h-1)K_2^{MW}(F) & ...( \text{using \cite[3.6(1)]{morel}})\\ 
	& = h\cdot K_2^{MW}(F) & ...(\because h-1=\left<-1\right> \text {is a unit by \cite[3.5(4)]{morel}})
\end{split}
}$$
\end{proof}

\begin{proof}[Proof of Proposition \ref{hurewicz-p1}]
Recall that $H$ is a universal map from $\pi_1^{\A^1}(\P^1)$ to a strongly $\A^1$-invariant sheaf of abelian groups. Thus  
it is enough to show that the abelianisation of $\pi_1^{\A^1}(\P^1)$ is strongly $\A^1$-invariant. $\pi_1^{\A^1}(\P^1)^{ab}$  is a quotient of a strongly $\A^1$-invariant sheaf therefore by Theorem \ref{main theorem}, we need to show that  $\pi_1^{\A^1}(\P^1)^{ab}$ is homotopy invariant.   However by \eqref{fa1} and Lemma \ref{hk2}, $\pi_1^{\A^1}(\P^1)^{\rm ab}$ fits in the following exact sequence 

$$ 0 \to \frac{\underline{K}_2^{MW}}{h\underline{K}_2^{MW}} \to \pi_1^{\A^1}(\P^1)^{ab} \to \G_m \to 0.$$
Thus it is enough to show that $\frac{\underline{K}_2^{MW}}{h\underline{K}_2^{MW}}$ is $\A^1$-invariant or equivalently $h\underline{K}_2^{MW}$ is strongly $\A^1$-invariant.
However this follows from \cite[3.32]{morel}.
\end{proof}

The following lemmas give explicit description of the Hurewicz morphism $H$. 

\begin{lemma}\label{explicit-h} There exists an isomorphism $\phi : H_1^{\A^1}(\P^1)\cong \underline{K}_1^{MW}$ such that 
$$ \phi \circ H([U][V]\theta(W)) = \eta[U^{-1}][V] + [W] $$
where $U,V,W$ are sections of $\G_m$ on an object in $\G_m$. 
\end{lemma}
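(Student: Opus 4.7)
The plan is to construct $\phi$ by first defining a candidate homomorphism $\psi \colon F^{\A^1}(1) \to \underline{K}_1^{MW}$ via the asserted formula, showing it factors through the abelianization to give $\phi$, and then verifying $\phi$ is an isomorphism. By Proposition \ref{hurewicz-p1} and Lemma \ref{hk2}, we identify $H_1^{\A^1}(\P^1)$ with $F^{\A^1}(1)/h\underline{K}_2^{MW}$, yielding the short exact sequence
$$0 \to \underline{K}_2^{MW}/h\underline{K}_2^{MW} \longrightarrow H_1^{\A^1}(\P^1) \longrightarrow \G_m \to 0.$$

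Using the set-level decomposition $F^{\A^1}(1) \cong \underline{K}_2^{MW} \times \G_m$ from Theorem \ref{fa1}, I set $\psi(a \cdot \theta(W)) := \bar\psi(a) + [W]$, where $\bar\psi \colon \underline{K}_2^{MW} \to \underline{K}_1^{MW}$ is the additive extension of $\bar\psi([U][V]) := \eta[U^{-1}][V]$. The main task is to verify that $\psi$ is a group homomorphism. Since $\underline{K}_2^{MW}$ is central in $F^{\A^1}(1)$, this reduces to checking that $\psi$ respects the cocycle $\theta(U)\theta(V) = \langle -1\rangle [U][V]\theta(UV)$ of \cite[7.31]{morel}, i.e., the identity
$$-\eta[U][V] = \langle -1\rangle\, \eta[U^{-1}][V]$$
in $\underline{K}_1^{MW}$. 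Using the standard Milnor--Witt relations $[U^{-1}] = -\langle U\rangle[U]$, $\langle U\rangle = \langle U^{-1}\rangle$, $\eta[a] = \langle a\rangle - 1$, and the Steinberg-type vanishing $[U][-U] = 0$ from \cite[Chapter 3]{morel}, this reduces to the identity $\eta^2[-U][U] = 0$, which is immediate. Since $\eta h = 0$, the map $\psi$ annihilates $h\underline{K}_2^{MW}$, and hence descends to a well-defined homomorphism $\phi \colon H_1^{\A^1}(\P^1) \to \underline{K}_1^{MW}$ satisfying the formula of the lemma.

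Finally, to show $\phi$ is an isomorphism, I plan to construct an explicit inverse $\phi^{-1} \colon \underline{K}_1^{MW} \to H_1^{\A^1}(\P^1)$ by the assignment $[W] \mapsto \overline{\theta(W)}$ on the generators. Well-definedness requires verifying that $\phi^{-1}$ respects the Milnor--Witt defining relations (Steinberg, $\eta$-relation, $h$-relation), which is handled by the same family of Milnor--Witt identities used above. Given well-definedness, $\phi^{-1}\circ \phi$ is the identity on the generating symbols $\theta(W)$ and $[U][V]$, forcing $\phi$ to be an isomorphism. The principal obstacle in the whole argument is this careful bookkeeping of Milnor--Witt $K$-theory identities; alternatively, one may invoke Morel's independent computation $H_1^{\A^1}(\P^1)\cong \underline{K}_1^{MW}$ from \cite[Chapter 6]{morel} and use the explicit formula above to pin down the isomorphism.
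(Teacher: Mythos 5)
Your strategy is workable and the Milnor--Witt identities you invoke do check out, but it is a genuinely different route from the paper's. The paper does not build $\phi$ by hand: it gets the isomorphism for free from the $\A^1$-suspension theorem for homology, $H_1^{\A^1}(\P^1)\cong H_1^{\A^1}(\Sigma\G_m)\cong\tilde{H}_0^{\A^1}(\G_m)$, together with Morel's identification of $\tilde{H}_0^{\A^1}(\G_m)$ with $\underline{K}_1^{MW}$ as the free strictly $\A^1$-invariant abelian sheaf on $(\G_m,1)$. A diagram of universal maps then yields $\phi(H(\theta(W)))=[W]$ with no computation, and the general formula follows from the single identity $[U][V]=\theta(U^{-1})^{-1}\theta(U^{-1}V)\theta(V)^{-1}$ of \cite[7.29(1)]{morel} combined with $[ab]=[a]+[b]+\eta[a][b]$. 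Your approach avoids the suspension theorem at the price of the cocycle bookkeeping and of a prior appeal to Proposition \ref{hurewicz-p1} and Theorem \ref{hurewicz} to identify $H_1^{\A^1}(\P^1)$ with $F^{\A^1}(1)^{\rm ab}$; that appeal is legitimate here, since neither result depends on this lemma.

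Two of your steps need more care than ``additive extension'' and ``checking the defining relations''. First, $\underline{K}_2^{MW}$ is not free abelian on the symbols $[U][V]$, so $\bar\psi([U][V]):=\eta[U^{-1}][V]$ does not automatically extend additively; the correct justification is that $\underline{K}_2^{MW}$ is the free strongly $\A^1$-invariant abelian sheaf on $\G_m\wedge\G_m$, so the involution $(U,V)\mapsto(U^{-1},V)$ induces the automorphism $\tau$ (as the paper notes before Lemma \ref{hurewicz-decoded}) and one should set $\bar\psi=\eta\cdot\tau$. Likewise your inverse $[W]\mapsto\overline{\theta(W)}$ should be produced from the universal property of $\underline{K}_1^{MW}$ as the free strongly $\A^1$-invariant abelian sheaf on $(\G_m,1)$, applied to the pointed map $W\mapsto\overline{\theta(W)}$ into the strongly $\A^1$-invariant sheaf $F^{\A^1}(1)^{\rm ab}$; since $\underline{K}_*^{MW}$ is not presented by generators and relations purely in degree one, ``verifying the Steinberg, $\eta$- and $h$-relations'' is not the right well-definedness check. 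Finally, testing $\phi^{-1}\circ\phi=\mathrm{id}$ on $\theta(W)$ and on $[U][V]$ separately suffices only if you also record either that the sections $\theta(W)$ generate $F^{\A^1}(1)$, or the computation $\phi^{-1}(\eta[U^{-1}][V])=\overline{\theta(U^{-1})^{-1}\theta(U^{-1}V)\theta(V)^{-1}}=\overline{[U][V]}$ --- both of which rest on exactly the identity \cite[7.29(1)]{morel} that drives the paper's shorter proof. With these repairs your argument is complete.
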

\begin{proof}
Since there is an $\A^1$-equivalence $\P^1_k \cong \Sigma \G_m $, we have 
$$ H_1^{\A^1}(\P^1_k) \cong H_1^{\A^1}(\Sigma \G_m) \cong \tilde{H}_0^{\A^1}(\G_m)$$
where the second isomorphism is due to $\A^1$-suspension theorem for homology \cite[6.30]{morel}. In the above statement  $\G_m$ is considered as a sheaf of sets pointed by $1$. 
By the definition of $\A^1$-homology groups, $\tilde{H}_0^{\A^1}(\G_m)$ is the strictly $\A^1$-invariant sheaf of abelian groups generated by the pointed sheaf $\G_m$. By \cite[3.2]{morel}, this must be isomorphic to $\underline{K}_1^{MW}$. Now recall that we have the following commutative diagram 
$$\xymatrix{
\G_m \ar[r] \ar[d]_{U\mapsto [U]}\ar@/^2pc/[rr]^\theta & \pi_1(\Sigma \G_m) \ar[r]  & \pi_1^{\A^1}(\P^1_k)\ar[d]^H \\
	\underline{K}_1^{MW}	 & \tilde{H}_0^{\A^1}(\G_m)\ar[l]^-{\sim} & H_1^{\A^1}(\Sigma \G_m) \ar[l]^-{\sim} \ar@/^2pc/[ll]^\phi
}$$
The diagram commutes because all morphisms in this diagram are a result of some universal property.  The commutativity of the diagram gives us the formula 
\begin{equation}\phi(H([W])) = [W]
\end{equation}
Now, using the following equality proved in \cite[7.29(1)]{morel}
 $$ \theta(U^{-1})^{-1} \theta(U^{-1}V) \theta(V)^{-1} = [U][V] $$
 we get
 $${
 \begin{split}
 \phi(H([U][V]\theta(W))) & = \phi( H (\theta(U^{-1})^{-1} \theta(U^{-1}V) \theta(V)^{-1}\theta(W))) &  \\
                                       & = -[U^{-1}] + [U^{-1}V] -[V] + [W] & ...(\text{using eqn(1) above}) \\
                                       & = \eta[U^{-1}][V] + [W] & ...(\text{using \cite[3.1(2)]{morel}})
 \end{split}
 }$$
\end{proof}

To state the next lemma, recall that $\underline{K}_2^{MW}$ is the free strongly $\A^1$-invariant sheaf of abelian groups generated by the pointed sheaf of sets $\G_m\wedge \G_m$. Thus any automorphism of $\G_m\wedge \G_m$ gives rise to an automorphism of $\underline{K}_2^{MW}$. In particular, the automorphism of $\underline{K}_2^{MW}$ induced by 
$${
\begin{split}
	\G_m\wedge \G_m & \longrightarrow  \G_m\wedge \G_m \\
	(U, V) & \mapsto (U^{-1}, V)
\end{split}
}$$ will be denoted by $\tau$. 
\begin{lemma} \label{hurewicz-decoded}
Let $\tau: \underline{K}_2^{MW} \to \underline{K}_2^{MW}$ denote the automorphism which sends $[U][V] \mapsto [U^{-1}][V]$. Then the restriction of the Hurewicz map $H$ to $\underline{K}_2^{MW}$ coincides with the composition 
$$ \underline{K}_2^{MW} \xrightarrow{\tau} \underline{K}_2^{MW} \xrightarrow{\cdot \eta} \underline{K}_1^{MW}$$
\end{lemma}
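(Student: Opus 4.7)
The plan is to deduce this lemma as a direct specialization of Lemma \ref{explicit-h}. Recall from Theorem \ref{fa1} that every element of $F^{\A^1}(1)$ is uniquely expressible as a product $k\cdot\theta(W)$ with $k\in\underline{K}_2^{MW}$ central and $W\in\G_m$, so that the embedded copy of $\underline{K}_2^{MW}$ inside $F^{\A^1}(1)$ corresponds exactly to those elements with $W=1$. Since $\theta$ arises from the pointed $\A^1$-equivalence $\P^1_k\sim\Sigma(\G_m)$, it is a pointed morphism, and therefore $\theta(1)$ is the identity element of $F^{\A^1}(1)$. Consequently, an element $[U][V]\in\underline{K}_2^{MW}\subset F^{\A^1}(1)$ is literally the element $[U][V]\theta(1)$.

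Next, I would substitute $W=1$ into the formula
$$\phi\circ H([U][V]\theta(W))=\eta[U^{-1}][V]+[W]$$
provided by Lemma \ref{explicit-h}. Using $[1]=0$ in $\underline{K}_1^{MW}$ (a standard relation in Milnor--Witt $K$-theory), this yields
$$\phi\circ H([U][V])=\eta[U^{-1}][V]=\eta\cdot\tau([U][V]),$$
by the very definition of the automorphism $\tau$.

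Finally, to upgrade this identity on symbols to an equality of morphisms of sheaves, I would invoke the universal property of $\underline{K}_2^{MW}$: it is the free strongly $\A^1$-invariant sheaf of abelian groups generated by the pointed sheaf $\G_m\wedge\G_m$. Hence any morphism of sheaves of abelian groups out of $\underline{K}_2^{MW}$ is determined by its values on the symbols $[U][V]$ for sections $U,V\in\G_m$. Both $\phi\circ H|_{\underline{K}_2^{MW}}$ and $\eta\cdot\tau$ are such morphisms to $\underline{K}_1^{MW}$, and they agree on generators, so they coincide.

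There is really no serious obstacle: once one correctly identifies the restriction to $\underline{K}_2^{MW}$ with the substitution $W=1$ (which only requires pointedness of $\theta$), the lemma is immediate from Lemma \ref{explicit-h}. The only mild subtlety is making sure to distinguish between $H$ as a map to $H_1^{\A^1}(\P^1)$ and its transport via the isomorphism $\phi$ to a map landing in $\underline{K}_1^{MW}$, which is what the statement implicitly uses.
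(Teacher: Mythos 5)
Your proposal is correct and follows the paper's own (one-line) argument: the paper likewise deduces the lemma directly from the explicit formula of Lemma \ref{explicit-h}, and you have simply filled in the implicit details (setting $W=1$, using $\theta(1)=e$ and $[1]=0$, and extending from symbols to all of $\underline{K}_2^{MW}$ since it is generated by them).
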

\begin{proof}
This follows directly from the explicit formula for $H$ in the above lemma. 
\end{proof}

\begin{proof}[Proof of Proposition \ref{etah}]
By universality of $H$, as noted before, the map $F^{\A^1}(1)\xrightarrow{\gamma} \G_m$ factors through $H$. Thus 
$$\Ker(H) \subset \Ker(\gamma) = \underline{K}_2^{MW}.$$ 
$\Ker(H)$ must coincide with the kernel of restriction of $H$ to $\underline{K}_2^{MW}$. Now the proposition follows from Lemmas \ref{hk2} and \ref{hurewicz-decoded}.
\end{proof}

\end{document}